\newtheorem{Theorem}{\sc Theorem}[section]
\newtheorem{Lemma}[Theorem]{\sc Lemma}
\newtheorem{Proposition}[Theorem]{\sc Proposition}
\newtheorem{Corollary}[Theorem]{\sc Corollary}
\newtheorem{Example}[Theorem]{\sc Example}
\newtheorem{Remark}[Theorem]{\sc Remark}
\newtheorem{Note}[Theorem]{\sc Note}
\newtheorem{Question}{\sc Question}
\newtheorem{ass}[Theorem]{\sc Assumption}
\theoremstyle{definition}
\newtheorem{Definition}[Theorem]{\sc Definition}
\newcommand{\bt}{\begin{Theorem}}
\def\beginlem{\begin{Lemma}}
\def\beginprop{\begin{Proposition}}
\def\begincor{\begin{Corollary}}
\def\begindef{\begin{Definition}}
\def\beginexamp{\begin{Example}}
\def\beginrem{\begin{Remark}}
\def\beginq{\begin{Question}}
\def\beginass{\begin{ass}}
\def\beginnote{\begin{Note}}
\newcommand{\et}{\end{Theorem}}
\def\endlem{\end{Lemma}}
\def\endprop{\end{Proposition}}
\def\endcor{\end{Corollary}}
\def\enddef{\end{Definition}}
\def\endexamp{\end{Example}}
\def\endrem{\end{Remark}}
\def\endq{\end{Question}}
\def\endass{\end{ass}}
\def\endnote{\end{Note}}
\newcommand{\norm}[1]{\lVert #1 \rVert}
\newcommand{\ot}{\otimes}
\newcommand{\ut}{\underline{\ot}}
\newcommand\ov[1]{\overline{#1}}
\newcommand{\bdy}{\partial}
\newcommand{\clq}{\mathcal{Q}}
\newcommand{\cls}{\mathcal{S}}
\newcommand{\Nat}{\mathbb{N}}
\newcommand{\Int}{\mathbb{Z}}
\newcommand{\dsc}{\mathbb{D}}
\newcommand{\cplx}{\mathbb{C}}
\newcommand{\cle}{\mathbb{T}}
\newcommand{\z}{\bm{z}}
\newcommand{\w}{\bm{w}}
\DeclareMathOperator{\bd}{bdy}
\begin{document}

\title[Essential normality and boundary representations]
{On quotient modules of $H^2(\mathbb{D}^n)$:
essential normality and boundary representations}

\author [Das]{B. Krishna Das}
\address{Department of Mathematics, Indian Institute of Technology Bombay,
Powai, Mumbai, India- 400076} \email{dasb@math.iitb.ac.in, bata436@gmail.com}
\author[Gorai]{Sushil Gorai}
\address{Department of Mathematics and Statistics, Indian Institute
    of Science Education and Research Kolkata, Mohanpur 741 246,
West Bengal, India} \email{sushil.gorai@iiserkol.ac.in}

\author[Sarkar]{Jaydeb Sarkar}
\address{Indian Statistical Institute,
Statistics and Mathematics Unit, 8th Mile, Mysore Road, Bangalore,
560059, India} \email{jay@isibang.ac.in, jaydeb@gmail.com}


\keywords{Hardy space, reproducing kernel Hilbert spaces, quotient
modules, essential normality, boundary representation}

\subjclass[2000]{47A13, 47A20, 47L25, 47L40, 46L05, 46L06}

\begin{abstract}
Let $\mathbb{D}^n$ be the open unit polydisc in $\mathbb{C}^n$, $n
\geq 1$, and let $H^2(\mathbb{D}^n)$ be the Hardy space over
$\mathbb{D}^n$. For $n\ge 3$, we show that if $\theta \in H^\infty(\mathbb{D}^n)$
is an inner function, then the $n$-tuple of commuting operators
$(C_{z_1}, \ldots, C_{z_n})$ on the Beurling type quotient module
$\mathcal{Q}_{\theta}$ is not essentially normal, where
\[\mathcal{Q}_{\theta} =
H^2(\mathbb{D}^n)/ \theta H^2(\mathbb{D}^n) \quad \mbox{and} \quad
C_{z_j} = P_{\mathcal{Q}_{\theta}}
M_{z_j}|_{\mathcal{Q}_{\theta}}\quad (j = 1, \ldots, n).\] Rudin's
quotient modules of $H^2(\mathbb{D}^2)$ are also shown to be not
essentially normal. We prove several results concerning boundary
representations of $C^*$-algebras corresponding to different classes
of quotient modules including doubly commuting quotient modules and
homogeneous quotient modules.
\end{abstract}

\maketitle

\section{Introduction}

In this paper, we intend to study essential normality and boundary
representations of a class of quotient modules of the Hardy module
over the unit polydisc $\mathbb{D}^n$ in $\cplx^n$, $n > 1$. To be
more specific, let $H^2(\mathbb{D}^n)$, $n \geq 1$, denote the Hardy
space of holomorphic functions on $\dsc^n$. We also call
$H^2(\mathbb{D}^n)$ the Hardy module over $\mathbb{C}[z_1, \ldots,
z_n]$ (see Section 2 for definition). Let $(M_{z_1}, \ldots,
M_{z_n})$ denote the (commuting) $n$-tuple of multiplication
operators by the coordinate functions on $H^2(\mathbb{D}^n)$. A
closed subspace $\cls$ of $H^2(\dsc^n)$ is called a
\textit{submodule} if $M_{z_i} \cls \subseteq \cls$ for all
$i=1,\dots,n$, and a closed subspace $\clq$ of $H^2(\dsc^n)$ is a
\emph{quotient module} if $\clq^{\perp}$ $(\cong H^2(\mathbb{D}^n)/
\clq)$ is a submodule. A quotient module $\clq$ is said to be of
\textit{Beurling type} \cite{GW} if
\[
\clq = \clq_{\theta} := H^2(\mathbb{D}^n) \ominus \theta H^2(\mathbb{D}^n)
\cong H^2(\mathbb{D}^n)/ \theta H^2(\mathbb{D}^n),
\]
for some inner function $\theta \in H^\infty(\mathbb{D}^n)$ (that
is, $\theta$ is a bounded analytic function on $\mathbb{D}^n$ and
$|\theta| = 1$ a.e. on the distinguished boundary $\mathbb{T}^n$ of
$\dsc^n$). We denote by $\cls_{\theta}$ the submodule $\theta
H^2(\mathbb{D}^n)$ of $H^2(\mathbb{D}^n)$. A quotient module $\clq$
of $H^2(\dsc^n)$ is \textit{essentially normal} \cite{chg} if the
commutator $[C_{z_i}, C^{*}_{z_j}]$ is compact for all $1\le i,j\le
n$, where
\[
C_{z_i} = P_{\clq}M_{z_i}|_{\clq}\quad \quad (i=1,\dots,n).
\]

Essential normality of Hilbert modules is a much studied object in
operator theory and function theory. It establishes important
connections between operator theory, algebraic geometry, homology
theory and complex analysis through the BDF theory~\cite{bdf}.
It is well known that any proper quotient module of
$H^2(\mathbb{D})$ is of Beurling-type and essentially normal. This,
however, does not hold in general:
\begin{itemize}
\item [(1)]
For $n = 2$ a Beurling type quotient module $\clq_{\theta}
\subseteq H^2(\mathbb{D}^2)$ is essentially normal if and only if
$\theta$ is a rational inner function of degree at most $(1, 1)$
\cite{GW}.
\smallskip

\item [(2)] For $n \geq 2$, a quotient module $\clq$ is a
Beurling type quotient module of $H^2(\mathbb{D}^n)$ if and only if
$\clq^{\perp}$ is a doubly commuting submodule \cite{SSW}.
\end{itemize}
An incomplete list of references on the study of essential normality of
different classes of quotient modules, including Clark type quotient modules and homogeneous quotient modules,
over the bidisc is: ~\cite{chg}, \cite{clark},
\cite{GW_2}, \cite{GW}, \cite{GP} and \cite{PW}.

In this paper we first investigate the essential normality of
certain classes of quotient modules including Beurling-type quotient
modules of $H^2(\mathbb{D}^n)$, $n \geq 3$.  We prove that the
Beurling type quotient modules of $H^2(\mathbb{D}^n)$ $(n\ge 3)$
 and Rudin quotient modules of $H^2(\mathbb{D}^2)$
are not essentially normal. We obtain a complete
characterization for essential normality of doubly commuting
quotient modules of an analytic Hilbert module (defined in Section
2) over $\cplx[\z]$ including $H^2(\mathbb{D}^n)$ and the weighted
Bergman modules $L^2_{a,\bm{\alpha}}(\dsc^n)$
$(\bm{\alpha}\in\Int^n, \alpha_i> -1, i=1,\dots,n)$ as special cases
$(n\ge 2)$.

We also study boundary representations, in the sense of Arveson
(\cite{Arv1}, \cite{Arv2}), of the $C^*$-algebra $C^*(\clq)$ for
different classes of quotient modules $\clq$ of $H^2(\dsc^n)$. Here,
given a quotient module $\clq$, we denote by $B(\clq)$ and
$C^*(\clq)$ the Banach algebra and the $C^*$-algebra generated by
$\{I_{\clq}, C_{z_i}\}_{i=1}^n$, respectively. For convenience in
notation we put
\[
B(\clq) = B(C_{z_1},\dots,C_{z_n}), \; \mbox{and}\; C^*(\clq) =
C^*(C_{z_1},\dots,C_{z_n}).
\]
It is well known that for an essentially normal quotient module
$\clq$ of $H^2(\mathbb{D}^n)$, $B(\clq)$ is an irreducible operator
algebra and the $C^*$-algebra $C^*(\clq)$ contains all compact
operators on $\clq$ (see Proposition 2.5 in \cite{Arv3}, and Theorem
3.3 and Lemma 3.4 in \cite{dqm}).

Let us also recall the definition of the boundary representations
and some relevant results from operator algebras. Let $A$ be an
operator algebra with identity, and let $C^*(A)$ be the
$C^*$-algebra generated by $A$. An irreducible representation
$\omega$ of $C^*(A)$ is a \emph{boundary representation} relative to
$A$ if $\omega|_{A}$ has a unique completely positive (CP) extension
to $C^*(A)$. An operator algebra $A$ has trivial Shilov ideal (\cite{dk})
if
\[\bigcap_{\omega\in \bd(A)}\ker \omega=\{0\},\]
where $\bd(A)$ denotes the collection of all boundary
representations of $C^*(A)$ relative to $A$.
It is of great interest and
importance to identify operator algebras with trivial Shilov ideal.
 In the particular case of irreducible
operator algebras containing compact operators, triviality of Shilov ideal
 and the fact that the
identity representation is a boundary representation are closely
related.

\bt\textup{(}\cite[Proposition 2.1.0]{Arv2}\textup{)}\label{identity}
Let $A$ be an irreducible operator
algebra with identity, and let $C^*(A)$ contain all the compact
operators. Then $A$ has trivial Shilov ideal
if and only if the identity representation of $C^*(A)$ is a boundary
representation relative to $A$.
\et

In our context, if $\clq$ is an essentially normal quotient module of
$H^2(\dsc^n)$ then $B(\clq)$ is irreducible and $C^*(\clq)$ contains
all the compact operators on $\clq$. Therefore, it is natural to ask
whether the identity representation of $C^*(\clq)$ is a boundary representation
relative to $B(\clq)$ for the case when $\clq$ is an essentially normal quotient module
of $H^2(\dsc^n)$. This problem has a complete solution for the
case $n = 1$ (see Arveson \cite[Theorem 3.5.3]{Arv1},\cite[Corollary 1]{Arv2}):

\bt[\textsf{Arveson}] \label{for n=1} Let $\clq_{\theta}$ be a
quotient module of $H^2(\dsc)$. Then the identity representation of
$C^*(\clq_{\theta})$ is a boundary representation relative to
$B(\clq_{\theta})$ if and only if $Z_{\theta}$ is a proper subset of
$\cle$, where $Z_{\theta}$ consists of all points $\lambda$ on
$\cle$ for which $\theta$ cannot be continued analytically from
$\dsc$ to $\lambda$. \et

For the class of essentially normal Beurling type quotient modules
of $H^2(\dsc^2)$, the following characterization was obtained in
~\cite{GW}.

\bt[\textsf{Guo and Wang}] Let $\theta \in
H^\infty(\mathbb{D}^2)$ be a rational inner function of degree at
most $(1, 1)$, and $\clq_{\theta}$ be the corresponding essentially
normal quotient module of $H^2(\mathbb{D}^2)$. Then the identity
representation of $C^*(\clq_{\theta})$ is a boundary representation
relative to $B(\clq_{\theta})$ if and only if $\theta$ is not a one
variable Blaschke factor. \et

In this paper, we study the same problem
for several classes of quotient modules of some Hilbert modules over $\dsc^n$, $n\ge 2$.
To be more precise, we study boundary representations for doubly commuting quotient modules
of an analytic Hilbert module over $\cplx[\z]$, and obtain some
direct results for the case of $H^2(\mathbb{D}^n)$ and
$L^2_a(\dsc^n)$ $(n\ge 2)$ (see Corollaries~\ref{thm-dblycomm-bdyrep}
and~\ref{Bergman}). We also consider the class of
homogeneous quotient modules of
$H^2(\dsc^2)$.

The paper is organized as follows. After obtaining some preliminary results
in Section 2, we consider essential normality of Beurling type quotient module of $H^2(\dsc^n)$ $(n\ge 3)$,
doubly commuting quotients modules of an analytic Hilbert module over $\cplx[\z]$ and Rudin quotient module of $H^2(\dsc^2)$
in Section 3. Section 4 is devoted to the study of boundary representations for doubly commuting quotient modules.
In Section 5, we discuss boundary
representations for
homogeneous quotient modules of
$H^2(\dsc^2)$.

\section{Preparatory results}\label{sec-prep}

In this section we recall some definitions, and prove some
elementary results which will be used later. We begin by briefly
recalling the definition of the Hardy module.

Let $\dsc^n=\{\bm{z} = (z_1,\dots, z_n)\in\cplx^n: |z_i|\le 1,
i=1,\dots,n\}$ denote the unit polydisc in $\mathbb{C}^n$. We denote
by $\mathbb{N}$ the set of all natural numbers including $0$. Set
$\mathbb{N}^n = \{\bm{k} = (k_1, \ldots, k_n): k_j \in \mathbb{N}, j
= 1, \ldots, n\}$ and $\z^{\bm{k}}: = z_1^{k_1} \cdots z_n^{k_n}$
for all $\bm{z} \in \mathbb{C}^n$ and $\bm{k} \in \mathbb{N}^n$.
Then the \textit{Hardy space} $H^2(\mathbb{D}^n)$ over the polydisc
$\mathbb{D}^n$ is defined as the space of all holomorphic functions
$f = \sum_{\bm{k} \in \mathbb{N}^n} a_{\bm{k}} \z^{\bm{k}}$ on
$\mathbb{D}^n$ such that $\|f\|^2:= \sum_{\bm{k} \in \mathbb{N}^n}
|a_{\bm{k}}|^2 < \infty$. It is well known that $H^2(\mathbb{D}^n)$
is a reproducing kernel Hilbert space corresponding to the Szeg\"{o}
kernel
\[\mathbb{S}(\bm{z}, \bm{w}) = \prod_{i=1}^n (1 - z_i
\bar{w}_i)^{-1}, \quad \quad (\bm{z}, \bm{w} \in \mathbb{D}^n)\]and
$(M_{z_1}, \ldots, M_{z_n})$ is a commuting tuple of isometries on
$H^2(\mathbb{D}^n)$, where \[(M_{z_i} f)(\w) = w_i f(\w) \quad \quad
(f \in H^2(\mathbb{D}^n), \w \in \dsc^n, i = 1, \ldots, n).\]We
represent the $n$-tuple of multiplication operators $(M_{z_1},
\ldots, M_{z_n})$ on $H^2(\mathbb{D}^n)$ as a Hilbert module over
$\mathbb{C}[\bm{z}] : = \mathbb{C}[z_1, \ldots, z_n]$ with the
following module action:
\[
\mathbb{C}[\bm{z}] \times H^2(\mathbb{D}^n) \to H^2(\mathbb{D}^n),\quad
(p, f)\mapsto p(M_{z_1}, \ldots, M_{z_n}) f.
\]
With the above module action $H^2(\dsc^n)$ is called the
\textit{Hardy module} over $\cplx[\bm{z}]$.

We also need to recall the definition of the normalized kernel
function corresponding to the Szeg\"{o} kernel on $\mathbb{D}^n$.
For each $\bm{w} \in\dsc^n$, the \textit{normalized kernel} function
$K_{\bm{w}}$ of $H^2(\dsc^n)$
 is defined by
\[
K_{\bm{w}}(\bm{z}):= \frac{1} {\|\mathbb{S}(\cdot, \bm{w})\|}
\mathbb{S}(\bm{z}, \bm{w}) = \prod_{i=1}^n \sqrt{(1-|w_i|^2)}
\frac{1}{1-\ov{w}_i z_i} \quad \quad (\bm{z} \in \mathbb{D}^n),
\]
where $\mathbb{S}(\cdot, \bm{w})(\bm{z}) = \mathbb{S}(\bm{z},
\bm{w})$ for all $\bm{z} \in \mathbb{D}^n$. This notion is useful
when one studies the Hardy space over $\mathbb{D}^n$, $n > 1$.

\beginlem \label{weak convergence}
Let $l \in \{1, \ldots, n\}$ be a fixed integer, and let $\bm{w}_l =
(w_1,\dots, w_{l-1}, w_{l+1},\dots, w_n)$ be a fixed point in
$\dsc^{n-1}$. Then $K_{(\bm{w}_l, w)}$  converges weakly to $0$ as
$w$ approaches to $\partial \mathbb{D}$, where $(\bm{w}_l,
w) = (w_1,\dots, w_{l-1}, w, w_{l+1},\dots, w_n)$.
\endlem
\begin{proof} For each $p\in \cplx[\bm{z}]$,
\begin{align}
\langle K_{(\bm{w}_l, w)}, p\rangle &= \overline{p(\bm{w}_l, w)}
\sqrt{1-|w|^2} \prod_{i=1, i\neq l}^n\sqrt{1-|w_i|^2} ,
\label{eq-pol}
\end{align}
which converges to zero as $w$ approaches to $\partial \mathbb{D}$.
For an arbitrary $f\in H^2(\dsc^n)$, the result now follows from the
fact that $\norm{K_{\bm{\lambda}}}=1$ for all $\bm{\lambda}
\in\dsc^n$ and $\cplx[\bm{z}]$ is dense in $H^2(\dsc^n)$.
\end{proof}

For a closed subspace $\cls$ of a Hilbert space $\mathcal{H}$, the
orthogonal projection of $\mathcal{H}$ onto $\cls$ is denoted
by $P_{\cls}$. For an inner function $\theta \in H^\infty(\mathbb{D}^n)$,
it is well known that
\[P_{\cls_{\theta}} = M_{\theta} M_{\theta}^*\ \text{ and }\
P_{\clq_{\theta}} = I_{H^2(\mathbb{D}^n)} - M_{\theta}
M_{\theta}^*,\]where $M_{\theta}$ is the multiplication operator
defined by
\[
(M_{\theta} f) (\bm{w}) = \theta(\bm{w}) f(\bm{w}) \quad \quad
(\bm{w} \in \mathbb{D}^n, f \in H^2(\mathbb{D}^n)).
\]
It follows from the reproducing property of the Szeg\"{o} kernel
that
\[M_{\theta}^* K(\cdot, \bm{w}) = \overline{\theta(\bm{w})} K(\cdot,
\bm{w}),\]where $K(\cdot, \bm{w}) := K_{\bm{w}}$, $\bm{w} \in
\mathbb{D}^n$. In particular, one has
\[
P_{\cls_{\theta}}(K_{\bm{w}}) = M_{\theta}M_{\theta}^* K_{\bm{w}} =
\overline{\theta(\bm{w})}\theta K_{\bm{w}} \quad \quad (\bm{w} \in
\mathbb{D}^n).
\]
These observations yield the following lemma.

\beginlem
Let ${\theta}$ be an inner function in $H^\infty(\dsc^n)$. Then
\begin{equation}
\label{projection formula} P_{\clq_{\theta}}(K_{\bm{w}})=
(1-\ov{\theta(\bm{w})}\theta) K_{\bm{w}} \quad \quad (\bm{w} \in
\mathbb{D}^n).
\end{equation}
\endlem

We now recall the definition of an analytic Hilbert module over $\cplx[z]$
(see \cite{dqm}). Let $k : \mathbb{D} \times \mathbb{D} \rightarrow
\mathbb{C}$ be a positive definite function such that $k(z, w)$ is
analytic in $z$ and anti-analytic in $w$. Let $\mathcal{H}_k \subseteq
\mathcal{O}(\dsc, \mathbb{C})$ be the corresponding reproducing
kernel Hilbert space, where $\mathcal{O}(\dsc, \mathbb{C})$ denotes the
set of all holomorphic functions on the unit disc. The Hilbert space $\mathcal{H}_k$ is said to
be a \textit{reproducing kernel Hilbert module} over $\mathbb{C}[z]$
if the multiplication operator $M_z$ is bounded on $\mathcal{H}_k$.

\begin{Definition}\label{clasRKHM}
A reproducing kernel Hilbert module $\mathcal{H}_{k}$ over
$\mathbb{C}[z]$ is said to be an analytic Hilbert module over
$\mathbb{C}[z]$ if $k^{-1}(z, w)$ is a polynomial in $z$ and
$\bar{w}$.
\end{Definition}

Typical examples of analytic Hilbert modules are the Hardy module
$H^2(\mathbb{D})$ with Szeg\"{o} kernel
\[K(z,w)=\frac{1}{1-z\bar{w}}\quad (z,w\in\dsc)\] and the weighted Bergman modules
$L^2_{a,\alpha}(\dsc)$ $(\alpha >-1, \alpha\in\Int)$ with kernel
\[K_{a,\alpha}(z,w)=\frac{1}{(1-z\bar{w})^{\alpha+2}} \quad (z,w\in\dsc, \alpha> -1). \]It is known
that a quotient module of an analytic Hilbert module is irreducible,
that is, $C_z$ does not have any non-trivial reducing subspace (cf.
Theorem 3.3 and Lemma 3.4 in \cite{dqm}). Using this, we obtain the
next lemma.

\begin{Lemma}
\label{commutative} Let $\clq$ be a non-zero quotient module of an
analytic Hilbert module  $\mathcal{H}$ over $\cplx[z]$. Then $[C_z,
C_z^*]=0$ if and only if $\clq$ is one dimensional.
\end{Lemma}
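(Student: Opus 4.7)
The backward direction is immediate: if $\dim\clq=1$, then every operator on $\clq$ is a scalar and hence commutes with its adjoint. So I will focus on the forward direction.

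Assume $[C_z,C_z^*]=0$, so that $C_z$ is a normal operator on $\clq$. The plan is to combine this with the irreducibility of $\clq$ (cited from \cite{dqm}): $C_z$ has no non-trivial reducing subspace. I will then invoke the following general fact: a normal operator $N$ on a Hilbert space $\mathcal{K}$ with $\dim\mathcal{K}\ge 2$ always admits a non-trivial reducing subspace. Granting this fact, $\clq$ is forced to be one-dimensional, completing the proof.

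To verify the general fact, I split into two cases via the spectral theorem for $N$. If the spectrum $\sigma(N)$ consists of more than one point, choose a Borel set $B\subseteq\sigma(N)$ with $\emptyset\ne B\ne \sigma(N)$; then the spectral projection $E(B)$ lies in the bicommutant of $N$, so its range is a non-trivial subspace reducing $N$. If instead $\sigma(N)=\{\lambda\}$, the spectral theorem forces $N=\lambda I_{\mathcal{K}}$, in which case every (necessarily non-trivial, since $\dim\mathcal{K}\ge 2$) closed subspace of $\mathcal{K}$ reduces $N$. Either way a non-trivial reducing subspace exists, contradicting irreducibility of $\clq$; therefore $\dim\clq=1$.

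The only potentially delicate point is the invocation of the spectral-theoretic fact above, but this is entirely standard, so I do not expect any real obstacle. The proof will be short, essentially just two or three lines: normality plus the irreducibility result of \cite{dqm} immediately forces one-dimensionality via the spectral theorem.
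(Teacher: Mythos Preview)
Your proposal is correct and takes essentially the same approach as the paper: both combine the irreducibility result from \cite{dqm} with the normality of $C_z$ to force $\dim\clq=1$. The only difference is cosmetic---the paper phrases it via commutants (normality makes $C^*(\clq)$ commutative, hence $C^*(\clq)\subseteq C^*(\clq)'=\cplx I$), whereas you phrase it via the spectral theorem to produce a reducing subspace; these are two standard routes to the same elementary fact.
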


\begin{proof}
First note that for any non-zero quotient module $\clq$ of $\mathcal{H}$, the
$C^*$-algebra $C^*(\clq)$ is irreducible. If $C_z$ is normal, then
$C^*(\clq)\subseteq C^*(\clq)'=\cplx I$. Thus $C^*(\clq)=\cplx
I$, and therefore, $\clq$ is one dimensional. The converse part is
trivial, and the proof follows.
\end{proof}

Let $\{k_i\}_{i=1}^n$ be positive definite functions on
$\mathbb{D}\times\mathbb{D}$. Then $\mathcal{H}_K :=
\mathcal{H}_{k_1}\ot\cdots\ot\mathcal{H}_{k_n}$ is said to be an
\emph{analytic Hilbert module over $\mathbb{C}[\z]$} if
$\mathcal{H}_{k_i}$ is an analytic Hilbert module over $\cplx[z]$
for all $i=1,\dots,n$. In this case, $\mathcal{H}_K \subseteq
\mathcal{O}(\dsc^n,\mathbb{C})$ and
\[
K(\z, \w) = \prod_{i=1}^n k_i(z_i, w_i) \quad \quad (\z, \w \in
\dsc^n),
\]
is the reproducing kernel function of $\mathcal{H}_K$ (cf.
\cite{dqm}). In the sequel, we will often identify $M_{z_i}$ on $\mathcal{H}_K$
with the operator $I_{\mathcal{H}_{k_1}} \otimes \cdots \otimes
\underbrace{M_z}\limits_{\textup{i-th place}} \otimes \cdots \otimes
I_{\mathcal{H}_{k_n}}$, $i = 1, \ldots, n$, on the $n$-fold Hilbert
space tensor product
$\mathcal{H}_{k_1}\ot\cdots\ot\mathcal{H}_{k_n}$. We end this section with a result on essential
normality of a Beurling type quotient module $\clq_{\theta}$, where
$\theta$ is a one variable inner function in $\mathbb{D}^n$.

\beginlem \label{lem-one-var-q-mod}
Let $\theta \in H^\infty(\mathbb{D}^n)$ be a one variable inner
function and $n \geq 3$. Then $\clq_{\theta}$ is not essentially
normal.
\endlem

\begin{proof}
Without loss of generality we may assume that
$\theta(\bm{z})=\theta'(z_1)$ for some inner function $\theta' \in
H^\infty(\dsc)$. Then it follows that $\cls_{\theta} =
\cls_{\theta'} \otimes H^2(\mathbb{D}^{n-1})$ and
\[\clq_{\theta}= H^2(\dsc^n)\ominus \theta H^2(\dsc^n)
= \clq_{\theta'}\otimes H^2(\dsc^{n-1}).\]
Now we compute the self commutator of $C_{z_2}$:
\begin{align*}
[C_{z_2}, C_{z_2}^*]&=P_{\clq_{\theta}}
M_{z_2}M_{z_2}^*|_{\clq_{\theta}} - P_{\clq_{\theta}}
M_{z_2}^*P_{\clq_{\theta}} M_{z_2}|_{\clq_{\theta}}\notag\\
&= P_{\clq_{\theta}} M_{z_2}M_{z_2}^*|_{\clq_{\theta}} -
I_{\clq_{\theta}} +  P_{\clq_{\theta}} M_{z_2}^*P_{\cls_{\theta}}
M_{z_2}|_{\clq_{\theta}}. \label{eq-2ndselfcomm}
\end{align*}
Using the fact \[P_{\cls_{\theta}} M_{z_2}|_{\clq_{\theta'} \otimes \mathbb{C}
\otimes H^2(\mathbb{D}^{n-2})} = (P_{\cls_{\theta'}} \otimes
I_{H^2(\mathbb{D})} \otimes I_{H^2(\mathbb{D}^{n-2})})
M_{z_2}|_{\clq_{\theta'} \otimes \mathbb{C} \otimes
H^2(\mathbb{D}^{n-2})} = 0,\]and \[M_{z_2}^*|_{\clq_{\theta'} \otimes \mathbb{C}
\otimes H^2(\mathbb{D}^{n-2})} = 0,\]
we conclude that
\[
[C_{z_2}, C_{z_2}^*]|_{\clq_{\theta'} \otimes \mathbb{C} \otimes
H^2(\mathbb{D}^{n-2})} = - I_{\clq_{\theta}}|_{\clq_{\theta'}
\otimes \mathbb{C} \otimes
H^2(\mathbb{D}^{n-2})} = -I_{\clq_{\theta'}\otimes \mathbb{C} \otimes
H^2(\dsc^{n-2})}.
\]
Since $n\ge 3$, $[C_{z_2}, C_{z_2}^*]|_{\clq_{\theta'} \otimes \mathbb{C} \otimes
H^2(\mathbb{D}^{n-2})}$ is not compact, and hence the
commutator $[C_{z_2}, C_{z_2}^*]$ is not compact.
This completes the proof.
\end{proof}

\section{Essential normality}\label{sec-essential-normality}

Our purpose in this section is to prove
a list of results concerning essential normality for certain classes of
quotient modules.  We begin with the class of Beurling type
quotient modules of $H^2(\dsc^n)$, $n \geq 3$.

\bt\label{T:nonessnormal} Let $\theta$ be an inner function in $H^\infty(\mathbb{D}^n)$ and $n \geq 3$. Then
$\clq_{\theta}$ is not essentially normal.
\et
\begin{proof} First recall that the multiplication tuple $(M_{z_1},\dots,M_{z_n})$ on $H^2(\dsc^n)$ is doubly commuting, that is
\[
 M_{z_i}^*M_{z_j}=M_{z_j}M_{z_i}^*
\]
if $i\neq j$. Now by Lemma \ref{lem-one-var-q-mod},
we may assume without loss of generality that $\theta$
depends on both $z_1$ and $z_2$
variables. We show that $[C_{z_1},C_{z_2}^*]$ is not compact. To
this end, we compute
\begin{align} [C_{z_1},C_{z_2}^*]&= P_{\clq_{\theta}}
M_{z_1}M_{z_2}^*|_{\clq_{\theta}} - P_{\clq_{\theta}}
M_{z_2}^*P_{\clq_{\theta}} M_{z_1}|_{\clq_{\theta}} \notag
= P_{\clq_{\theta}} M_{z_2}^*P_{\cls_{\theta}} M_{z_1}|_{\clq_{\theta}} \notag\\
&=P_{\clq_{\theta}} M_{z_2}^*P_{\cls_{\theta}\ominus
(z_1\cls_{\theta} +z_2\cls_{\theta})} M_{z_1}|_{\clq_{\theta}}\notag
+ P_{\clq_{\theta}} M_{z_2}^*P_{z_1\cls_{\theta} +z_2\cls_{\theta}}
M_{z_1}|_{\clq_{\theta}}\notag\label{eq-mixcomm}.
\end{align}
Since $M_{z_1}$ and $M_{z_2}$ are isometries, we have
\[
P_{\clq_{\theta}}M_{z_i}^*P_{z_i\cls_{\theta}}=0 \quad \quad (i=1,
2).
\]
This implies
\[P_{\clq_{\theta}} M_{z_2}^*P_{z_1\cls_{\theta} +z_2\cls_{\theta}}
M_{z_1}|_{\clq_{\theta}} = 0,\]and
\[[C_{z_1},C_{z_2}^*] =
P_{\clq_{\theta}} M_{z_2}^*P_{\cls_{\theta}\ominus (z_1\cls_{\theta}
+z_2\cls_{\theta})}
 M_{z_1}|_{\clq_{\theta}}.\]
On the other hand, since $\cls_{\theta} = \theta H^2(\mathbb{D}^n)$,
we have
\[
\begin{split}
\cls_{\theta}\ominus(z_1\cls_{\theta}+z_2\cls_{\theta}) & = \theta
H^2(\mathbb{D}^n) \ominus \theta(z_1 H^2(\mathbb{D}^n) + z_2
H^2(\mathbb{D}^n))
\\
& = \theta(\mathbb{C} \otimes \mathbb{C} \otimes
H^2(\mathbb{D}^{n-2})).
\end{split}
\]
Then for $f\in \mathbb{C} \otimes \mathbb{C} \otimes
H^2(\mathbb{D}^{n-2})$ and $g\in H^2(\dsc^n)$,
\[
 \langle M_{z_2}^* \theta f, \theta g\rangle= \langle f, z_2g\rangle=0,
\]
and therefore
\[
M_{z_2}^*(\cls_{\theta}\ominus(z_1\cls_{\theta}+z_2\cls_{\theta}))
\subseteq \clq_{\theta}.
\]
Consequently, \[[C_{z_1},C_{z_2}^*] =
M_{z_2}^*P_{\cls_{\theta}\ominus (z_1\cls_{\theta}
+z_2\cls_{\theta})} M_{z_1}|_{\clq_{\theta}}.\] By Lemma \ref{weak
convergence}, it is enough to show that $\langle
[C_{z_1},C_{z_2}^*]K_{\w}, K_{\w}\rangle$ does not converge to $0$
as $w_j$ approaches to $\bdy\dsc$ for some fixed $3\le j\le n$, and
keeping all other co-ordinates of $\w = (w_1, \ldots w_{j-1}, w_j,
w_{j+1}, \ldots, w_n) \in\dsc^n$ fixed. To this end, let $\w
\in\dsc^n$. Since $\{\theta z_3^{m_3}\cdots z_n^{m_n}:
m_3,\dots,m_n\in \mathbb{N}\}$ is an orthonormal basis of
$\cls_{\theta}\ominus(z_1\cls_{\theta}+z_2\cls_{\theta})$, we have
\[\begin{split}
P_{\cls_{\theta}\ominus(z_1\cls_{\theta}+z_2\cls_{\theta})}(z_2K_{\w})&=
\sum_{m_3,\dots,m_n\in \mathbb{N}} \langle z_2 K_{\w}, \theta
z_3^{m_3}\cdots z_n^{m_n}\rangle
\theta z_3^{m_3}\cdots z_n^{m_n} \notag\\
&= \sum_{m_3,\dots,m_n\in\mathbb{N}} \langle K_{\w}, z_3^{m_3}\cdots
z_n^{m_n}(M_{z_2}^*\theta)\rangle  \theta z_3^{m_3}\cdots z_n^{m_n}\notag\\
&= \frac{1}{\|\mathbb{S}(\cdot, \w)\|} \theta \sum_{m_3,\dots,m_n\in
\mathbb{N}}(\ov{w_3}z_3)^{m_3}\dots
(\ov{w_n}z_n)^{m_n} \ov{M_{z_2}^* \theta (\w)}\notag\\
&= \ov{M_{z_2}^*\theta(\w)} \prod_{j=1}^2 (1-|w_j|^2)^{\frac{1}{2}}
\Big(\prod_{i=3}^n K_{w_i}\Big) \theta. \end{split}\]
Here $K_{w_i}=K_{\w}$ with $\w=(0,\dots,w_i,\dots,0)$.
Thus
\[\begin{split}
\langle [C_{z_1}, C_{z_2}^*]K_{\w}, K_{\w}\rangle &= \langle
M_{z_2}^* P_{\cls_{\theta} \ominus (z_1\cls_\theta
+z_2\cls_{\theta})} M_{z_1}P_{\clq_\theta} K_{\w}, K_{\w}\rangle
\notag\\&= \langle M_{z_1}P_{\clq_{\theta}} K_{\w},
P_{\cls_{\theta} \ominus (z_1 \cls_{\theta} + z_2 \cls_{\theta})}
(z_2 K_{\w})\rangle \notag\\
&=(M_{z_2}^* \theta)(\w) \prod_{j=1}^2
(1-|w_j|^2)^{\frac{1}{2}}\Big\langle M_{z_1} P_{\clq_{\theta}}
K_{\w}, \prod_{i=3}^n K_{w_i} \theta \Big\rangle\notag\\
&=(M_{z_2}^*
\theta)(\w)\prod_{j=1}^2(1-|w_j|^2)^{\frac{1}{2}}\Big\langle
M_{z_1}(1 - \ov{\theta(\w)} \theta) K_{\w}, \prod_{i=3}^n K_{w_i}
\theta \Big \rangle,
\end{split}\]
where the last equality follows
from \eqref{projection formula}. Since $M_{z_1}^* (\prod_{i=3}^n
K_{w_i}) = 0$ and $M^*_\theta M_\theta = I_{H^2(\mathbb{D}^n)}$, we
have
\[\langle M_{z_1} \theta K_{\w}, \prod_{i=3}^n K_{w_i} \theta
\Big \rangle = \langle \theta  M_{z_1}  K_{\w}, \prod_{i=3}^n
K_{w_i} \theta \Big \rangle = \langle K_{\w},
M_{z_1}^*(\prod_{i=3}^n K_{w_i}) \Big \rangle = 0.\]
Therefore,
\[\begin{split} \langle [C_{z_1}, C_{z_2}^*]K_{\w}, K_{\w}\rangle &
=(M_{z_2}^*\theta)(\w)\prod_{j=1}^2
(1-|w_j|^2)^{\frac{1}{2}} \Big\langle M_{z_1} K_{\w}, \prod_{i=3}^n
K_{w_i} \theta \Big\rangle\\ &
=(M_{z_2}^*\theta)(\w)\prod_{j=1}^2 (1-|w_j|^2)^{\frac{1}{2}}
\Big\langle K_{\w}, \prod_{i=3}^n K_{w_i} (M_{z_1}^* \theta)
\Big\rangle
\\ &=(M_{z_2}^*\theta)(\w) \prod_{j=1}^2(1-|w_j|^2)^{\frac{1}{2}} \;
\Big(\ov{M_{z_1}^*\theta(\w)} \frac{1}{\|\mathbb{S}(\cdot,
\w)\|} \prod_{i=3}^n \frac{1}{(1-|w_j|^2)^{\frac{1}{2}}}\Big)\\ & =
(M_{z_2}^*\theta)(\w) \; \ov{(M_{z_1}^*\theta)(\w)}
\prod_{j=1}^2(1-|w_j|^2).
\end{split}\]
Since $\theta$ depends on both $z_1$ and $z_2$ variables, $M_{z_1}^*\theta$
and $M_{z_2}^*\theta$ are non-zero functions.
Therefore it follows that there exist an $l
\in \{3, \ldots, n\}$ and $\w_k=(w_1,\dots, w_{l-1}, \lambda_k, w_{l+1},\dots,w_n)\in\dsc^n$ $(k\in\Nat)$, where
$\{\lambda_k\}\to \lambda\in\bdy\dsc$ and $w_i'$s are fixed, such that the limit of
\[(M_{z_2}^*\theta)(\w_k) \; \ov{(M_{z_1}^*\theta)(\w_k)}
\prod_{j=1}^2(1-|w_j|^2)\]
as $k\to \infty $ is a non-zero number.
This completes the proof.
\end{proof}

We now proceed to the case of doubly commuting quotient modules
of an analytic Hilbert module over $\mathbb{C}[\z]$.
Let $\clq$ be a quotient module of an analytic Hilbert module
$\mathcal{H}_K$ over $\mathbb{C}[\z]$. It is known that $\clq$ is
doubly commuting (that is, $[C_{z_i}, C_{z_j}^*] = 0$ for all $1
\leq i < j \leq n$) if and only if $\clq=\clq_1\ot\cdots\ot\clq_n$
for some quotient module $\clq_i$ of $\mathcal{H}_{k_i}$,
$i=1,\dots,n$ (see \cite{dqm},
\cite{bshift} and \cite{sarkar}).


\bt\label{thm-dblycomm-essnormal} Let
$\clq=\clq_1\ot\cdots\ot\clq_n$ be a doubly commuting quotient
module of an analytic Hilbert module $\mathcal{H}_K
=\mathcal{H}_{k_1}\ot\cdots\ot\mathcal{H}_{k_n}$ over
$\mathbb{C}[\z]$, $n\ge 2$. Then $\clq$ is essentially normal if and
only if one of the following holds:
\begin{itemize}
\item[(i)] $\clq$ is finite dimensional.
\item[(ii)] There exits an $i \in \{1, \ldots, n\}$
such that $\clq_{i}$ is an infinite dimensional essentially normal
quotient module of $\mathcal{H}_{k_i}$, and $\clq_j \cong \mathbb{C}$
for all $j \neq i$.
\end{itemize}
\et
\begin{proof}
Let $\clq=\clq_1\ot\cdots\ot\clq_n$ be an infinite dimensional
essentially normal quotient module. Then at least one of
$\clq_{1},\dots,\clq_{n}$ is infinite dimensional. Without loss of
 generality we assume that $\clq_n$ is infinite dimensional. For
each $i=1,\dots,n$, we now compute the self-commutator:
\begin{align}
[C_{z_i},C_{z_i}^*]&= P_{\clq}M_{z_i}M_{z_i}^*|_{\clq}-
P_{\clq}M_{z_i}^*P_{\clq}M_{z_i}|_{\clq}\nonumber\\
&=P_{\clq_1}\ot\cdots\ot P_{\clq_{i-1}}\ot
\underbrace{[C_{z},C_z^*]_i}\limits_{\textup{i-th place}}\ot
P_{\clq_{i+1}}\ot\cdots\ot P_{\clq_n}, \label{3}
\end{align}
where $[C_z, C_z^*]_i$ is the self-commutator corresponding to the quotient module
$\clq_i$. Since $\clq_n$ is infinite dimensional, the compactness of
$[C_{z_i},C_{z_i}^*]$ implies that  $[C_z,C_z^*]_i=0$ for all
$i=1,\dots,n-1$. Therefore, by Lemma~\ref{commutative}, it follows that
$\clq_i \cong \mathbb{C}$, $i=1,\dots,n-1$.

\noindent Finally, for $i=n$, the compactness of $[C_{z_n},
C_{z_n}^*]=P_{\clq_1}\ot\cdots\ot P_{\clq_{n-1}}\ot [C_z, C_z^*]_n$
implies that $[C_z, C_z^*]_n$ is compact, that is, $\clq_n$ is
essentially normal.
\smallskip

For the converse, it is enough to show
that (ii) implies $\clq$ is essentially normal. Again, without loss
of generality, we assume that $\clq_n$ is infinite dimensional
essentially normal quotient module. Then it readily follows from
~\eqref{3} that $[C_{z_i}, C_{z_i}^*]=0$, $i=1,\dots,n-1$, and
$[C_{z_n}, C_{z_n}^*]$ is compact. Now the proof follows from
Fuglede-Putnam theorem.
\end{proof}
\smallskip

The above result applies, in particular, if $\mathcal{H}_K$ is
$H^2(\dsc^n)$ or the weighted Bergman modules
$L^2_{a,\bm{\alpha}}(\dsc^n)$ $(\bm{\alpha}\in \Int^n, \alpha_i>-1,
i=1,\dots,n)$. Moreover, since every quotient module of
$H^2(\mathbb{D})$ is essentially normal, by Theorem
\ref{thm-dblycomm-essnormal} we have the following corollary.

\begin{Corollary}
Let $\clq=\clq_{1}\otimes\cdots\otimes\clq_{n}$ be a doubly
commuting quotient module of $H^2(\dsc^n)$, $n \geq 2$. Then $\clq$
is essentially normal if and only if one of the following holds:
\begin{itemize}
\item[(i)] $\clq$ is finite dimensional.

\item[(ii)]  There exits an $i \in \{1, \ldots, n\}$ such that $\clq_{i}$
is infinite dimensional, and $\clq_j \cong \mathbb{C}$ for
all $j \neq i$.
\end{itemize}
\end{Corollary}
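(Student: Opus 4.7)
The plan is to derive this corollary as a direct specialization of Theorem~\ref{thm-dblycomm-essnormal} to the Hardy module setting. First I would verify that $H^2(\dsc^n)$ fits the hypotheses of that theorem: the Hardy module factors as the $n$-fold tensor product $H^2(\dsc) \ot \cdots \ot H^2(\dsc)$, and each factor $H^2(\dsc)$ is an analytic Hilbert module over $\cplx[z]$ because the reciprocal of the Szeg\"o kernel, $\mathbb{S}^{-1}(z,w) = 1 - z\bar{w}$, is a polynomial in $z$ and $\bar{w}$, which matches Definition~\ref{clasRKHM}. Hence $H^2(\dsc^n)$ is an analytic Hilbert module over $\cplx[\z]$ in the sense introduced in Section~\ref{sec-prep}, and Theorem~\ref{thm-dblycomm-essnormal} applies verbatim to every doubly commuting quotient module $\clq = \clq_1 \ot \cdots \ot \clq_n$ of $H^2(\dsc^n)$.

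Next I would compare the two statements. Theorem~\ref{thm-dblycomm-essnormal} already yields the implication that essential normality of $\clq$ forces either $\clq$ to be finite dimensional or exactly one factor $\clq_i$ to be infinite dimensional with $\clq_j \cong \cplx$ for $j\neq i$; this half of the corollary is immediate. For the converse, Theorem~\ref{thm-dblycomm-essnormal} requires in addition that the distinguished factor $\clq_i$ be essentially normal. The single remaining point, therefore, is to argue that this extra hypothesis is automatic in the Hardy setting.

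This is precisely where one invokes the one-variable fact recalled in the introduction: every proper quotient module of $H^2(\dsc)$ is of Beurling type and essentially normal (cf. ~\cite{JS-S}). Consequently, any infinite dimensional quotient module $\clq_i$ of $H^2(\dsc)$ automatically satisfies the essential normality clause of Theorem~\ref{thm-dblycomm-essnormal}(ii). Substituting this back, condition (ii) in Theorem~\ref{thm-dblycomm-essnormal} collapses to the weaker condition (ii) in the corollary, completing the equivalence.

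No real obstacle is expected in the argument; the entire content is a bookkeeping reduction to Theorem~\ref{thm-dblycomm-essnormal} combined with the classical one-variable result. The only subtlety worth stating explicitly in the write-up is the verification that $H^2(\dsc^n)$ is an analytic Hilbert module over $\cplx[\z]$, since the notion involves the kernel rather than the module structure directly.
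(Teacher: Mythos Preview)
Your proposal is correct and follows essentially the same route as the paper: the paper simply remarks that $H^2(\dsc^n)$ is an analytic Hilbert module over $\cplx[\z]$, applies Theorem~\ref{thm-dblycomm-essnormal}, and then drops the ``essentially normal'' clause in (ii) using the one-variable fact that every quotient module of $H^2(\dsc)$ is essentially normal. One small point: you phrase the one-variable input as covering only \emph{proper} quotient modules, but the corollary as stated does not exclude $\clq_i = H^2(\dsc)$ itself; since the unilateral shift also has compact self-commutator, this case is covered too and you may want to say so explicitly.
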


It is also well known that a quotient module $\clq$ of the Bergman module
$L^2_a(\dsc)$ is essentially normal if and only if
\[
 \mbox{dim}(\cls\ominus z\cls) <\infty,
\]
 where $\cls:=L^2_a(\dsc)\ominus \clq$ is the corresponding
 submodule (see~\cite[Theorem 3.1]{zhu}).
 Using this and Theorem~\ref{thm-dblycomm-essnormal}, we have the
 following result.
 \begin{Corollary}
 Let $\clq=\clq_{1}\otimes\cdots\otimes\clq_{n}$ be a doubly
commuting quotient module of $L_a^2(\dsc^n)$, $n \geq 2$. Then $\clq$
is essentially normal if and only if one of the following holds:
\begin{itemize}
\item[(i)] $\clq$ is finite dimensional.

\item[(ii)]  There exists an $i \in \{1, \ldots, n\}$ such that $\clq_{i}$
is infinite dimensional with
$\mbox{dim}(\cls_i\ominus z\cls_i) <\infty$ and $\clq_j \cong \mathbb{C}$ for
all $j \neq i$, where $\cls_i=L^2_a(\dsc)\ominus \clq_i$.
\end{itemize}
 \end{Corollary}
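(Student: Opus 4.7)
The plan is to derive this corollary as a direct specialization of Theorem~\ref{thm-dblycomm-essnormal} combined with the one-variable characterization of essential normality in the Bergman setting. The Bergman module $L^2_a(\dsc)$ is a reproducing kernel Hilbert space with kernel $k(z,w) = (1-z\bar{w})^{-2}$, so $k^{-1}(z,w) = 1 - 2z\bar{w} + z^2\bar{w}^2$ is a polynomial in $z$ and $\bar{w}$. Thus $L^2_a(\dsc)$ is an analytic Hilbert module over $\cplx[z]$ in the sense of Definition~\ref{clasRKHM}, and consequently $L^2_a(\dsc^n) \cong L^2_a(\dsc)\ot\cdots\ot L^2_a(\dsc)$ is an analytic Hilbert module over $\cplx[\z]$. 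Hence Theorem~\ref{thm-dblycomm-essnormal} applies verbatim.

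Applying that theorem, I would conclude that $\clq = \clq_1\ot\cdots\ot\clq_n$ is essentially normal if and only if either $\clq$ is finite dimensional, or there exists a unique index $i\in\{1,\ldots,n\}$ such that $\clq_i$ is an infinite dimensional \emph{essentially normal} quotient module of $L^2_a(\dsc)$ and $\clq_j\cong\cplx$ for every $j\neq i$.

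The remaining step is to rewrite the essential normality condition on the single-variable factor $\clq_i$ in terms of the associated submodule $\cls_i = L^2_a(\dsc)\ominus\clq_i$. By the cited result~\cite[Theorem 3.1]{zhu}, $\clq_i$ is essentially normal in $L^2_a(\dsc)$ if and only if $\dim(\cls_i\ominus z\cls_i)<\infty$. Substituting this equivalence into condition~(ii) coming from Theorem~\ref{thm-dblycomm-essnormal} yields exactly the statement of the corollary. There is no real obstacle here — the entire content reduces to verifying that $L^2_a(\dsc)$ fits the framework of an analytic Hilbert module (so that Theorem~\ref{thm-dblycomm-essnormal} is invoked legitimately) and then quoting Zhu's one-variable criterion to translate the abstract essential normality condition on the factor $\clq_i$ into the concrete codimension condition on $\cls_i\ominus z\cls_i$.
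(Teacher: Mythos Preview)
Your proposal is correct and follows exactly the paper's approach: the paper derives the corollary by noting that $L^2_a(\dsc^n)$ is an analytic Hilbert module, applying Theorem~\ref{thm-dblycomm-essnormal}, and then invoking Zhu's criterion \cite[Theorem~3.1]{zhu} to rewrite essential normality of the one-variable factor $\clq_i$ as the condition $\dim(\cls_i\ominus z\cls_i)<\infty$. Your explicit verification that $k^{-1}(z,w)$ is a polynomial is a nice addition, but otherwise the argument is identical.
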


We now restrict our attention to $H^2(\dsc^2)$, and formulate
the definition of the Rudin quotient module of
$H^2(\mathbb{D}^2)$ (see \cite{CDS14}, \cite{DS}). Let $\Psi = \{\psi_n\}_{n=0}^\infty \subseteq
H^2(\mathbb{D})$ be an increasing sequence of finite Blaschke
products and $\Phi = \{\varphi_n\}_{n=0}^\infty \subseteq
H^2(\mathbb{D})$ be a decreasing sequence of Blaschke products, that
is, $\psi_{n+1}/\psi_n$ and $\varphi_{n}/\varphi_{n+1}$ are
non-constant inner functions for all $n\in\Nat$. Then the
\textit{Rudin quotient module} corresponding to $\Psi$ and $\Phi$
is denoted by $\clq_{\Psi, \Phi}$,
and defined by
\[\clq_{\Psi, \Phi}:= \mathop{\bigvee}_{n=0}^\infty
\big(\clq_{\psi_n} \otimes \clq_{\varphi_n}\big).
\]
We denote by $\cls_{\Psi,\Phi}$ the submodule $H^2(\dsc^2)\ominus\clq_{\Psi,\Phi}$
corresponding to $\clq_{\Psi,\Phi}$. The
following representations of $\clq_{\Psi,\Phi}$ and
$\cls_{\Psi,\Phi}$ are very useful:
\begin{equation}
\clq_{\Psi,\Phi}=\bigoplus_{n\ge 0}
(\clq_{\psi_n}\ominus\clq_{\psi_{n-1}})\ot \clq_{\varphi_n}\ \text{
and } \cls_{\Psi,\Phi}=\clq'\ot H^2(\dsc)\bigoplus_{n\ge 0}
(\clq_{\psi_n}\ominus\clq_{\psi_{n-1}})\ot \cls_{\varphi_n},
\label{rudin representation}
\end{equation}
where $\clq_{\psi_{-1}}:=\{0\}$ and $\clq'=H^2(\dsc)\ominus
\vee_{n\ge 0} \clq_{\psi_n}$.
The first equality follows from the fact that $\clq_{\psi_n}\subseteq \clq_{\psi_{n+1}}$
and $\clq_{\varphi_n}\supseteq \clq_{\varphi_{n+1}}$ $(n\ge 0)$ and the second equality
can be checked easily using the equality $\clq_{\Psi,\Phi}\oplus \cls_{\Psi,\Phi}=H^2(\dsc^2)$.

Next we show that the Rudin quotient modules are not essentially
normal.
\begin{Theorem}
Let $\clq_{\Psi,\Phi}$ be a Rudin quotient module of $H^2(\dsc^2)$
corresponding to an increasing sequence of
finite Blaschke products $\Psi=\{\psi_n\}_{n\ge 0}$ and a decreasing
sequence of Blaschke products $\Phi=\{\varphi_n\}_{n\ge 0}$. Then
$\clq_{\Psi,\Phi}$ is not essentially normal.
\end{Theorem}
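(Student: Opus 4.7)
The plan is to exhibit a non-compact commutator among $\{[C_{z_i}, C_{z_j}^*]\}_{i,j=1,2}$ on $\clq_{\Psi,\Phi}$, primarily through the self-commutator $[C_{z_2}, C_{z_2}^*]$, with a fallback to $[C_{z_1}, C_{z_1}^*]$ in the degenerate case.

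First I show that $C_{z_2}$ respects the decomposition $\clq_{\Psi,\Phi}=\bigoplus_n\mathcal{K}_n\otimes\clq_{\varphi_n}$ (with $\mathcal{K}_n:=\clq_{\psi_n}\ominus\clq_{\psi_{n-1}}$) and acts there as $I_{\mathcal{K}_n}\otimes C_z^{(\varphi_n)}$: for $f\otimes g$ in the $n$-th summand, $M_{z_2}(f\otimes g) = f\otimes P_{\clq_{\varphi_n}}(zg) + f\otimes P_{\varphi_n H^2}(zg)$, and the second piece lies in $\mathcal{K}_n\otimes\cls_{\varphi_n}\subseteq\cls_{\Psi,\Phi}$ by the companion representation of $\cls_{\Psi,\Phi}$, hence is annihilated by $P_{\clq_{\Psi,\Phi}}$. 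Consequently $[C_{z_2}, C_{z_2}^*] = \bigoplus_n I_{\mathcal{K}_n}\otimes[C_z, C_z^*]^{(\varphi_n)}$. Each $\varphi_n$ has infinite degree (forced by non-constancy of $\varphi_k/\varphi_{k+1}$ for every $k$), so $\clq_{\varphi_n}$ is infinite-dimensional and $[C_z, C_z^*]^{(\varphi_n)}\ne 0$ by Lemma~\ref{commutative}.

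Next, select unit vectors $f_n\in\mathcal{K}_n$ and $g_n\in\clq_{\varphi_n}$ realizing the operator norm $\|[C_z, C_z^*]^{(\varphi_n)}\|$. The family $\{f_n\otimes g_n\}_{n\ge 0}$ is orthonormal in $\clq_{\Psi,\Phi}$ and
\[\|[C_{z_2}, C_{z_2}^*](f_n\otimes g_n)\| = \|[C_z, C_z^*]^{(\varphi_n)}\|.\]
If these norms are bounded below along a subsequence, compactness fails. In the degenerate regime where they decay to zero (as can occur when the Blaschke mass of $\varphi_n$ concentrates on $\partial\dsc$), I fall back on $[C_{z_1}, C_{z_1}^*]$: using $I - C_{z_1}C_{z_1}^* = P_{\clq_{\Psi,\Phi}}(P_\cplx\otimes I)|_{\clq_{\Psi,\Phi}}$ and applying this to $f\otimes g\in\mathcal{K}_0\otimes\clq_{\varphi_0}$ with a unit $f$ satisfying $f(0)\ne 0$ (possible since $P_{\mathcal{K}_0}(1)\ne 0$) gives
\[(I - C_{z_1}C_{z_1}^*)(f\otimes g) = f(0)\Bigl[P_{\mathcal{K}_0}(1)\otimes g + \sum_{m\ge 1}P_{\mathcal{K}_m}(1)\otimes P_{\clq_{\varphi_m}}(g)\Bigr],\]
whose norm is bounded below by $|f(0)|\|P_{\mathcal{K}_0}(1)\|\|g\|$ on the infinite-dimensional subspace $\cplx f\otimes\clq_{\varphi_0}$. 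A complementary analysis of $I - C_{z_1}^*C_{z_1} = M_{z_1}^*P_{\cls_{\Psi,\Phi}}M_{z_1}|_{\clq_{\Psi,\Phi}}$ shows this term contributes only a compact (Hilbert--Schmidt-type) correction, so the difference $[C_{z_1}, C_{z_1}^*]$ remains non-compact.

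The main obstacle is ensuring the fallback covers every Rudin quotient: the key is verifying compactness of the cross-term operator arising from $I - C_{z_1}^*C_{z_1}$, which collects contributions $M_z^*(P_{\mathcal{K}_m}(zf))\otimes P_{\clq_{\varphi_0}\ominus\clq_{\varphi_m}}(g)$ for $m\ge 1$. This rests on the Bessel-type decay $\sum_m\|P_{\mathcal{K}_m}(zf)\|^2\le\|zf\|^2$ combined with the finite-dimensionality of each $\mathcal{K}_m$, which together ensure that the cross-term piece is much smaller than the bounded-below contribution extracted in the previous paragraph.
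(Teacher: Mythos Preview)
Your block-diagonal computation for $C_{z_2}$ is correct and elegant: indeed $[C_{z_2},C_{z_2}^*]=\bigoplus_n I_{\mathcal{K}_n}\otimes [C_z,C_z^*]^{(\varphi_n)}$, and each block is finite rank, so compactness is equivalent to $\|[C_z,C_z^*]^{(\varphi_n)}\|\to 0$. However, the degenerate regime is genuine: one checks that $\|[C_z,C_z^*]^{(\varphi)}\|^2=(1-|\varphi(0)|^2)^2-|\varphi'(0)|^2$, and for $\varphi_n=\prod_{k\ge n}b_{a_k}$ with a Blaschke sequence $\{a_k\}$ accumulating fast enough at a single boundary point, this quantity can tend to $0$. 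So the fallback is essential, and that is where your argument breaks down.

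The fallback has two concrete gaps. First, in writing the ``cross-term'' contributions to $I-C_{z_1}^*C_{z_1}=M_{z_1}^*P_{\cls_{\Psi,\Phi}}M_{z_1}|_{\clq_{\Psi,\Phi}}$ you omitted the summand $\clq'\otimes H^2(\dsc)$ of $\cls_{\Psi,\Phi}$. When $\clq'=H^2(\dsc)\ominus\bigvee_n\clq_{\psi_n}\neq 0$ (e.g.\ $\psi_n=\prod_{k\le n}b_{a_k}$ for a Blaschke sequence), this produces an extra term $M_z^*P_{\clq'}(zf)\otimes g$, which is \emph{not} compact in $g$ and directly competes with your bounded-below piece $f(0)\,P_{\mathcal{K}_0}(1)\otimes g$; there is no reason the two cannot cancel to a compact remainder. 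Second, even ignoring $\clq'$, the assertion that ``Bessel decay $\sum_m\|P_{\mathcal{K}_m}(zf)\|^2<\infty$ plus $\dim\mathcal{K}_m<\infty$'' forces compactness of $g\mapsto\sum_{m\ge 1}M_z^*P_{\mathcal{K}_m}(zf)\otimes P_{\clq_{\varphi_0}\ominus\clq_{\varphi_m}}(g)$ is not justified: the projections $P_{\clq_{\varphi_0}\ominus\clq_{\varphi_m}}$ have norm $1$ and increasing (nested) ranges, and the first tensor factors $M_z^*P_{\mathcal{K}_m}(zf)$ need not be orthogonal, so square-summability of their norms does not give operator-norm convergence of the tail.

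By contrast, the paper avoids any case split by testing the \emph{functional calculus} commutator $[C_{\psi_m(z_1)},C_{\psi_m(z_1)}^*]$ (for a suitably chosen $m$) against kernel vectors $K_\beta\otimes K_{\lambda_i}$, where $b_\beta$ divides $\psi_{m+1}/\psi_m$ and $\{\lambda_i\}$ is a zero sequence of $\varphi_{m+1}$ tending to $\bdy\dsc$. The choice of $\psi_m$ in place of $z_1$ is exactly what kills the cross terms cleanly: since $\psi_m K_\beta\in\clq_{\psi_{m+1}}\ominus\clq_{\psi_m}$, one gets $P_{\cls}M_{\psi_m(z_1)}(K_\beta\otimes K_{\lambda_i})=0$ directly from the representation \eqref{rudin representation}, leaving a constant nonzero diagonal entry $-(1-|\psi_m(\beta)|^2)$ against a weakly null sequence. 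This is the missing idea in your fallback: rather than wrestling with $I-C_{z_1}^*C_{z_1}$, pass to $C_{\psi_m(z_1)}$ so that the ``defect'' term vanishes identically on the chosen test vectors.
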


\begin{proof}
Let $b_{\beta}$, the Blaschke factor corresponding to
$\beta\in\dsc$, be a factor of $\psi_{m+1}/\psi_{m}$ for some $m\ge
0$. For contradiction, we assume that $\clq_{\Psi,\Phi}$ is
essentially normal. Since $\clq:= \clq_{\Psi,\Phi}$ is essentially normal, for a polynomial $p$ it is easy to verify using a
simple commutator manipulation that $[C_{p(z_1)}, C_{p(z_1)}^*]$ is compact, where
$C_{p(z_1)}= P_{\clq}M_{p(z_1)}|_{\clq}$.
Now as $\psi_m$ is a finite Blaschke product and can be approximated by polynomials,
$[C_{\psi_m(z_1)}, C_{\psi_m(z_1)}^*]$ is also compact,
where
$C_{\psi_m(z_1)} = P_{\clq} M_{\psi_m(z_1)}|_{\clq}$.
Now setting $\cls:=\cls_{\Psi,\Phi}$, we
have
\begin{align}
[C_{\psi_m(z_1)}, C_{\psi_m(z_1)}^*]&= P_{\clq} M_{\psi_m(z_1)}
M_{\psi_m(z_1)}^*|_{\clq}-
P_{\clq}M_{\psi_m(z_1)}^* P_{\clq} M_{\psi_m(z_1)}|_{\clq}\notag\\
&=-P_{\clq}(I-M_{\psi_m(z_1)} M_{\psi_m(z_1)}^*)|_{\clq}+
P_{\clq}M_{\psi_m(z_1)}^* P_{\cls}M_{\psi_m(z_1)}|_{\clq}\notag\\
&=-P_{\clq}(P_{\clq_{\psi_m}}\ot I)|_{\clq}+
P_{\clq}M_{\psi_m(z_1)}^*P_{\cls}M_{\psi_m(z_1)}|_{\clq}. \label{6}
\end{align}
Since $\varphi_{m+1}$ is an infinite Blaschke product, there exists a
sequence $(\lambda_i)_{i\in\Nat}$ in the zero set of $\varphi_{m+1}$ such that
$K_{\lambda_i}\in\clq_{\varphi_{m+1}}$ and $\lambda_i$ approaches to
$\bdy\dsc$ as $i\to \infty$.
Furthermore, since $K_{\beta}\ot
K_{\lambda_i}\in \clq_{\psi_{m+1}}\ot \clq_{\varphi_{m+1}}\subseteq \clq$ and  $\psi_mK_{\beta}\ot K_{\lambda_i}\in
(\clq_{\psi_{m+1}}\ominus \clq_{\psi_{m}})\ot \clq_{\varphi_{m+1}}$ by the divisibility of $\psi_{m+1}$ and $\psi_m$,
we have $P_{\cls}(\psi_m K_{\beta}\ot K_{\lambda_i})=0$, $i\in\Nat$ (by
~\eqref{rudin representation}). Thus
\[
P_{\clq}M_{\psi_m(z_1)}^*P_{\cls}M_{\psi_m(z_1)}(K_{\beta}\ot
K_{\lambda_i})=0 \quad (i\in \Nat).
\]
Finally, from ~\eqref{6}, we have
\begin{align*}
\langle [C_{\psi_m(z_1)}, C_{\psi_m(z_1)}^*](K_{\beta}\ot
K_{\lambda_i}), K_{\beta}\ot K_{\lambda_i}\rangle &= - \langle
(P_{\clq_{\psi_m}} K_{\beta})\ot K_{\lambda_i}, K_{\beta}\ot
K_{\lambda_i}\rangle\\
&=-\langle (1-\ov{\psi_m(\beta)}\psi_m)K_{\beta},
K_{\beta}\rangle\\
&=-(1-|\psi_m(\beta)|^2),
\end{align*}
which does not converges to $0$ as $\lambda_i$ approaches to
$\bdy\dsc$.
Thus by Lemma~\ref{weak convergence}, we have the desired contradiction.
This completes the proof.
\end{proof}

\begin{Remark}
Let $m>1$. For a decreasing sequence of Blaschke products
$\{\varphi_n\}_{n=1}^m$ and an increasing sequence of finite Blaschke products
$\{\psi_n\}_{n=1}^m$, we consider the quotient module
\[
\clq=\bigvee_{n=1}^m\clq_{\psi_n}\ot\clq_{\varphi_n}.
\]
Adapting the techniques in the proof of the above theorem, one can conclude that
$\clq$ is essentially normal if and only if $\varphi_n$ is a finite
Blaschke product for all $n=1,\dots,m$. In other words, $\clq$ is
essentially normal if and only if $\clq$ is finite dimensional.
\end{Remark}

\section{Boundary Representations for doubly commuting quotient modules}\label{sec-bdyrepn}

In this section we study boundary representations for doubly
commuting quotient modules of an analytic Hilbert module over
$\cplx[\z]$. First, we prove a general result in the setting of
minimal tensor products of $C^*$-algebras. Before that we fix some
notations. We denote by $V_1\ut V_2$ the algebraic tensor product of
two vector spaces $V_1$ and $V_2$, and by $A_1 \otimes A_2$ the
minimal tensor product of two $C^*$-algebras $A_1$ and $A_2$ where
the norm on $A_1\otimes A_2$ is obtained via the identification
$A_1\otimes A_2\subseteq \mathcal{B}(\mathcal{H})\otimes \mathcal{B}(\mathcal{K})$
corresponding to any faithful representations of $A_1$ and $A_2$ in $ \mathcal{B}(\mathcal{H})$
and $\mathcal{B}(\mathcal{K})$ respectively.

The base case $(n=2)$ of the following result is due to Hopenwasser (see Lemmas 1 and 3 in
\cite{Hop}). The proof for the
general case $n$ can be obtain easily by applying Hopenwasser's result $n-1$ times and therefore we omit the proof.


\beginlem[cf. ~\cite{Hop}]
 \label{general result}
Let $A_i$ be a unital subalgebra of
$\mathcal{B}(\mathcal{H}_i)$ for some Hilbert space $\mathcal{H}_i$,
and let $C^*(A_i)$ be the irreducible $C^*$-algebra generated by
$A_i$ in $\mathcal{B}(\mathcal{H}_i)$, $i = 1,\dots,n$. Set
$A:=\ov{(A_1\underline{\ot}\cdots\underline{\ot}A_n)}$, the norm closure of
$A_1\underline{\ot}\cdots \underline{\ot} A_n$ in
$\mathcal{B}(\mathcal{H}_1\ot\cdots\ot\mathcal{H}_n)$. Then the following are
equivalent.
\begin{itemize}
\item[(i)]
 The identity representation of $C^*(A_1)\ot\cdots\ot C^*(A_n)$
 is a boundary representation relative to $A$.
\item[(ii)]
The identity representation of $C^*(A_i)$ is a boundary representation
relative to $A_i$ for all $i=1,\dots,n$.
\end{itemize}
\endlem


As a straightforward consequence of the above lemma we obtain
the following:

\bt \label{thm-analhm-bdyrep} Let $\clq=\clq_{1}\otimes \dots\otimes
\clq_{n}$ be a doubly commuting quotient module of an analytic
Hilbert module
$\mathcal{H}=\mathcal{H}_{K_1}\ot\cdots\ot\mathcal{H}_{K_n}$ over
$\cplx[\z]$. Then the following are equivalent.
\begin{itemize}
\item[(i)] The identity representation of
$C^*(\clq)$ is a boundary representation relative to
$B(\mathcal{Q})$.

\item[(ii)] The identity representation of
$C^*(\clq_i)$ is a boundary representation relative to
$B(\mathcal{Q}_i)$ for all $i=1,\dots, n$.
\end{itemize}
\et
\begin{proof}
The result follows from Lemma~\ref{general result} and the fact that
\[
C^*(\clq)=C^*(\clq_1)\ot\cdots\ot C^*(\clq_n),\] and\[
B(\clq)=\overline{B(\clq_1)\ut\cdots\ut B(\clq_n)},
\]
where the closure is in the norm topology of $B(\clq)$.
\end{proof}

The following result is now an immediate consequence of
Theorems \ref{for n=1} and \ref{thm-analhm-bdyrep}.

\begincor\label{thm-dblycomm-bdyrep} Let $\clq = \clq_{\theta_1}\otimes
\dots\otimes \clq_{\theta_n}$ be a doubly commuting quotient module
of $H^2(\dsc^n)$, where $\theta_i$, $i=1,\dots,n$, is a one variable
inner function. Then the following are equivalent.
\begin{itemize}
\item[(i)] The identity representation of
${C}^*(\clq)$ is a boundary representation relative to $B(\clq)$.
\item[(ii)] The identity representation of ${C}^*(\clq_{\theta_i})$ is a boundary
representation relative to $B(\clq_{\theta_i})$ for all $i=1,\dots,
n$.
\item[(iii)] For all $i=1,\dots, n$, ${Z}_{\theta_i}$ is a proper subset
of $\mathbb{T}$, where ${Z}_{\theta_i}$ consists of all
points $\lambda$ on $\mathbb{T}$ for which $\theta_i$ cannot be
continued analytically from $\dsc$ to $\lambda$.
\end{itemize}
\endcor

Now we turn to the case of the Bergman module $L^2_a(\dsc^n)$.
For $n=1$, boundary representations corresponding to a quotient module of
$L^2_a(\dsc)$ are studied in ~\cite{WH}.
For a submodule $\cls$ of $L^2_a(\dsc)$, set
\[
 Z_{*}(\cls):=\bigcap\limits_{f\in\cls} Z_*(f),
 \]
 where
\[
Z_*(f)=\big\{\lambda\in\dsc: f(\lambda)=0\big\}\cup\big\{\lambda\in
\mathbb{T}: \liminf\limits_{z\in\dsc,z\to\lambda}|f(z)|=0\big\}.
\]
It is easy to see that for a finite dimensional quotient module
$\clq$ of $L^2_a(\dsc)$, the identity representation of $C^*(\clq)$
is always a boundary representation relative to $B(\clq)$. On the
other hand, for an infinite dimensional  $\clq$, the identity
representation of $C^*(\clq)$ is a boundary representation relative
to $B(\clq)$ if and only if $\mbox{dim}(\cls\ominus z\cls)=1$ and
$Z_*(\cls)$ is a proper subset of $\mathbb{T}$, where
$\cls=L^2_a(\dsc)\ominus\clq$ is the corresponding submodule (see
~\cite[Theorem 1.2]{WH}). Using this and
Theorem~\ref{thm-analhm-bdyrep}, we have the following result.

\begincor
\label{Bergman}
 Let $\clq=\clq_1\ot\cdots\ot\clq_n$ be a doubly commuting quotient module
 of $L^2_a(\dsc^n)$. Then the following are equivalent.
 \begin{itemize}
\item[(i)] The identity representation of
${C}^*(\clq)$ is a boundary representation relative to $B(\clq)$.
\item[(ii)] The identity representation of ${C}^*(\clq_{i})$ is a boundary
representation relative to $B(\clq_{i})$ for all $i=1,\dots,
n$.
\item[(iii)]
If $\clq_i$ $(1\le i\le n)$ is infinite dimensional
then $\mbox{dim}(\cls_i\ominus z\cls_i)=1$
and $Z_*(\cls_i)$ is a proper subset of $\mathbb{T}$,
where $\cls_i=L^2_a(\dsc)\ominus\clq_i$ is the corresponding
submodule.
\end{itemize}
\endcor

\section{Boundary representations for homogeneous quotient
modules}\label{Sect6}

The purpose of this section is to investigate boundary
representations for homogeneous quotient modules of $H^2(\dsc^2)$.
We begin with a lemma which is a standard application
of Arveson's theory on boundary representations
\cite{Arv1, Arv2}. For generality, we prove it for quotient modules
of $H^2(\dsc^n)$.
\textsf{For an  essentially normal quotient module $\clq $ of $H^2(\dsc^n)$,
we denote by $\sigma_{e}(\clq)$ the essential joint spectrum of
$(C_{z_1},\dots,C_{z_n})$}.

\begin{Lemma}\label{c spectral set}
Let $\clq$ be an essentially normal quotient module of
$H^2(\dsc^n)$.
\begin{itemize}
\item[(a)] If there exists a matrix-valued polynomial $p$ such that
\[\norm{p(C_{z_1},\dots,C_{z_n})}> \norm{p}_{\sigma_{e}(\clq)}^{\infty}:=\sup_{\z\in \sigma_{e}(\clq)}\|p(\z)\|,\]
then the identity representation of $C^*(\clq)$ is a boundary
representation relative to $\mathcal{B}(\clq)$.
\item[(b)]
If the commuting tuple $(C_{z_1},\dots,C_{z_n})$ has a normal
dilation on $\sigma_{e}(\clq)$, then the identity representation of
$C^*(\clq)$ is not a boundary representation relative to
$\mathcal{B}(\clq)$.
\end{itemize}
\end{Lemma}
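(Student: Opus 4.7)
The key structural fact for both parts is that, since $\clq$ is essentially normal with irreducible $C^*(\clq)\supseteq K(\clq)$, the Calkin quotient $C^*(\clq)/K(\clq)$ is $*$-isomorphic to the commutative algebra $C(\sigma_e(\clq))$ via the symbol map $q$ sending $C_{z_j}\mapsto z_j$. Whether the identity representation is a boundary representation relative to $B(\clq)$ amounts to whether $\id|_{B(\clq)}$ has a unique unital completely positive (UCP) extension to $C^*(\clq)$. Accordingly, the strategy for both parts is to relate UCP extensions of $\id|_{B(\clq)}$ to UCP maps $\phi:C(\sigma_e(\clq))\to B(\clq)$ sending $z_j\mapsto C_{z_j}$.

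\textbf{Part (b).} The given normal dilation---commuting normals $N_1,\ldots,N_n$ on some $\mathcal{K}\supseteq \clq$ with $\sigma(N_1,\ldots,N_n)\subseteq \sigma_e(\clq)$ and $p(C_{z_1},\ldots,C_{z_n})=P_\clq p(N_1,\ldots,N_n)|_\clq$ for every polynomial $p$---yields through continuous functional calculus a $*$-homomorphism $\rho:C(\sigma_e(\clq))\to B(\mathcal{K})$, $f\mapsto f(N_1,\ldots,N_n)$. Its compression $\phi(\cdot):=P_\clq\rho(\cdot)|_\clq$ is a UCP map from $C(\sigma_e(\clq))$ into $B(\clq)$, and the composition $\tilde\tau:=\phi\circ q:C^*(\clq)\to B(\clq)$ is a UCP map that coincides with $\id$ on polynomials in the $C_{z_j}$'s by the dilation identity, hence on all of $B(\clq)$ by continuity. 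Since $\tilde\tau|_{K(\clq)}=0$ whereas $\id|_{K(\clq)}\neq 0$, one has $\tilde\tau\neq \id$. Thus $\id|_{B(\clq)}$ admits two distinct UCP extensions, so the identity is not a boundary representation.

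\textbf{Part (a).} Argue by contradiction. Suppose the identity is not a boundary representation; then there is a UCP map $\tau:C^*(\clq)\to B(\clq)$ with $\tau|_{B(\clq)}=\id$ and $\tau\neq \id$. By the Stinespring dilation of $\tau$ combined with the canonical decomposition of a representation of a $C^*$-algebra containing the compacts into an atomic part---a multiple of the identity representation of $C^*(\clq)$---and a part factoring through $C^*(\clq)/K(\clq)$, one extracts a UCP map $\phi:C(\sigma_e(\clq))\to B(\clq)$ satisfying $\phi\circ q|_{B(\clq)}=\id|_{B(\clq)}$; in particular $\phi(z_j)=C_{z_j}$ for each $j$, and consequently $\phi(p(z_1,\ldots,z_n))=p(C_{z_1},\ldots,C_{z_n})$ for every scalar polynomial $p$. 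Complete contractivity of $\phi$ then forces, for every $k\times k$ matrix-valued polynomial $p$,
\[
\norm{p(C_{z_1},\ldots,C_{z_n})}_{M_k(B(\clq))}=\norm{\phi^{(k)}(p)}\le \norm{p}_{M_k(C(\sigma_e(\clq)))}=\norm{p}^\infty_{\sigma_e(\clq)},
\]
contradicting the hypothesis.

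\textbf{Main obstacle.} The technical heart is the extraction step in part (a): producing $\phi$ from the hypothetical non-identity UCP extension $\tau$ demands a careful Stinespring/atomic-decomposition analysis to peel off the identity part of $\tau$ and verify that the remaining piece genuinely factors through the Calkin quotient $C(\sigma_e(\clq))$. Part (b) is comparatively routine once the dilation is invoked; the common thread tying the two parts together is the Arveson-type equivalence between $\sigma_e(\clq)$ being a complete spectral set for $(C_{z_1},\ldots,C_{z_n})$ and the non-existence of a normal $\sigma_e(\clq)$-dilation, which governs both implications.
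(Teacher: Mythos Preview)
The paper's proof handles both parts by a single appeal to Arveson's boundary theorem \cite[Theorem~2.1.1]{Arv2}: for an irreducible $C^*(S)\supseteq K(\clq)$, the identity representation is a boundary representation for $S$ if and only if the Calkin map $q:C^*(\clq)\to C^*(\clq)/K(\clq)\cong C(\sigma_e(\clq))$ fails to be completely isometric on $S$. For (a), the matrix-polynomial hypothesis says precisely that $q$ is not completely isometric on $B(\clq)\subseteq B(\clq)+B(\clq)^*$; for (b), the normal dilation on $\sigma_e(\clq)$ forces $q$ to be completely isometric on $B(\clq)+B(\clq)^*$. In each case Arveson's theorem finishes in one line.

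Your argument for (b) is correct and amounts to a direct, self-contained proof of the easy direction of Arveson's theorem in this setting: the dilation manufactures the UCP map $\phi:C(\sigma_e(\clq))\to B(\clq)$, and $\phi\circ q$ is a second UCP extension of $\id|_{B(\clq)}$. This is a legitimate alternative to citing the theorem.

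Part (a), however, has a real gap at the extraction step. The Stinespring/atomic decomposition gives $\tau=\tau_0+\tau_1$ with $\tau_0$ annihilating $K(\clq)$ and $\tau_1(T)=\sum_\alpha W_\alpha^*TW_\alpha$, but $\tau_0$ is merely CP, not unital, and on $B(\clq)$ one only gets $\tau_0(a)=a-\tau_1(a)$, not $\tau_0(a)=a$. There is no evident renormalization turning $\tau_0$ into a UCP $\phi$ with $\phi(z_j)=C_{z_j}$; nor is it clear that $\tau_0\neq 0$ at all (showing that $\tau_0=0$ forces $\tau=\id$ already requires a nontrivial argument). What you actually need is the implication ``$\exists\,\tau\neq\id$ UCP with $\tau|_{B(\clq)}=\id$ $\Longrightarrow$ $q$ is completely isometric on $B(\clq)+B(\clq)^*$,'' and that is exactly the hard direction of \cite[Theorem~2.1.1]{Arv2}. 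You have identified it as the main obstacle but then asserted its conclusion rather than proved it. Either cite Arveson's theorem, as the paper does, or supply the missing argument.
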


\begin{proof}
a) Since $\clq$ is essentially normal and $C^*(\clq)$ is irreducible, we have that
$K(\clq)\subseteq C^*(\clq)$ and the following
extension
\[
0\longrightarrow K(\clq)\hookrightarrow C^*(\clq)\longrightarrow
C(\sigma_{e}(\clq))\longrightarrow 0.
\]
If there exists a matrix-valued polynomial $p$ such that
\[
\norm{p(C_{z_1},\dots,C_{z_n})}>
\norm{p}_{\sigma_{e}(\clq)}^{\infty},
\]
then the restriction of the canonical contractive homomorphism
\[
q: C^*(\clq)\to C^*(\clq)/K(\clq)\cong C(\sigma_{e}(\clq))
\]
to $\mathcal{B}(\clq)$ is not a complete isometry. The desired
conclusion now follows from the Arveson's boundary theorem \cite[Theorem 2.1.1]{Arv2}.

(b) The existence of a normal dilation implies that the above
completely contractive map $q$ restricted to the linear span of
$\mathcal{B}(\clq)\cup \mathcal{B}(\clq)^*$ is a complete isometry.
Then the conclusion again follows from the Arveson's boundary theorem \cite[Theorem 2.1.1]{Arv2}.
\end{proof}

The following basic property of two variable homogeneous polynomials
will be used subsequently. Given a homogeneous polynomial $p
\in \cplx[z_1,z_2]$ there exist homogeneous polynomials $p_1, p_2
\in \cplx[z_1,z_2]$, unique up to a scalar multiple of modulus one,
such that
\[
 p=p_1p_2,
\]
and
\[
Z(p_1)\cap \bdy\dsc^2\subset \mathbb{T}^2 \quad\text{and}\quad
Z(p_2)\cap \bdy\dsc^2\subset (\dsc\times\mathbb{T})\cup (\mathbb{T}\times \dsc).
\]
Let $pH^2(\dsc^2)$ denote the submodule of $H^2(\mathbb{D}^2)$ generated by
$p$. Suppose that $\clq_p$ is the corresponding quotient module of
$H^2(\mathbb{D}^2)$, that is,
\[
\clq_{p} =H^2(\dsc^2)\ominus p H^2(\dsc^2).
\]
The following characterization of essential normality of $\clq_p$ is
due to Guo and Wang \cite[Theorem 1.1]{GP}.

\bt[Guo \& Wang, ~\cite{GP}] \label{homogenous} Let $p$ be a non-zero homogeneous polynomial
in $\cplx[z_1,z_2]$, and $p=p_1p_2$ be the factorization of $p$ as
above. Then the quotient module $\clq_{p}$ is essentially normal if
and only if $p_2$ has one of the following forms:
\begin{itemize}
\item[(i)]
$p_2 \equiv c$ with $c\neq 0$,
\item[(ii)]
 $p_2=\alpha z_1+\beta z_2$ with $|\alpha|\neq|\beta|$,
 \item[(iii)]
 $p_2=c(z_1-\alpha z_2)(z_2-\beta z_1)$ with $|\alpha|<1, |\beta|<1$ and
 $c\neq 0$.
 \end{itemize}
\et

The following result in \cite{GP} gives a description of the essential joint spectrum of the
above type of quotient modules. For a proof we refer the reader to ~\cite[Theorem 6.2]{GP}.

\begin{Lemma}[Guo \& Wang, ~\cite{GP}]
\label{newlabel}
 Let $p$ be a homogeneous polynomial. Then
 \[\sigma_{e}(\clq_{p})=Z(p)\cap \bdy\dsc^2.\]
\end{Lemma}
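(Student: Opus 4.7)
The plan is to prove the stated chain of equalities $\sigma_{e}(\clq_p) = Z(p)\cap \bdy\dsc^2 = Z(p)\cap \mathbb{T}^2$ in three stages: an upper bound on $\sigma_e(\clq_p)$, a matching lower bound, and a set-theoretic collapse that uses the homogeneity of $p$.

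For the upper bound $\sigma_e(\clq_p)\subseteq Z(p)\cap\bdy\dsc^2$, I would first observe that $p\in [p]=\cls_p$ forces $p(C_{z_1},C_{z_2})=P_{\clq_p}M_p|_{\clq_p}=0$. Applying the spectral mapping theorem for the joint essential (Taylor) spectrum gives $\sigma_e(\clq_p)\subseteq Z(p)$, and the contractivity $\|C_{z_i}\|\le 1$ places it inside $\overline{\dsc^2}$. To rule out interior points, I would appeal to the standard Koszul complex argument: for $\lambda\in\dsc^2$ the Koszul complex of $(M_{z_1}-\lambda_1,M_{z_2}-\lambda_2)$ on $H^2(\dsc^2)$ is Fredholm, with one-dimensional top cohomology generated by $\mathbb{S}(\cdot,\lambda)$, and since $[p]$ is singly generated this Fredholmness descends to $\clq_p$, giving $\lambda\notin\sigma_e(\clq_p)$.

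For the reverse $Z(p)\cap\bdy\dsc^2\subseteq \sigma_e(\clq_p)$, I would construct explicit Weyl sequences. Given $\lambda\in Z(p)\cap\bdy\dsc^2$, the homogeneity of $p$ keeps the entire ray through the origin in $Z(p)$; set $w_k:=(1-\tfrac{1}{k})\lambda\in\dsc^2\cap Z(p)$. Since $M_p^*K_{w_k}=\overline{p(w_k)}\,K_{w_k}=0$, the normalized kernel $v_k:=K_{w_k}$ lies in $\clq_p$, and the reproducing property yields $C_{z_i}^*v_k=\overline{w_{k,i}}\,v_k\to\overline{\lambda_i}\,v_k$. A straightforward extension of Lemma~\ref{weak convergence} (using that at least one coordinate of $\lambda$ lies on $\mathbb{T}$) gives $v_k\rightharpoonup 0$, so $\{v_k\}$ is a Weyl sequence exhibiting $\lambda\in\sigma_e(\clq_p)$.

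The set identity $Z(p)\cap\bdy\dsc^2=Z(p)\cap\mathbb{T}^2$ then follows from the decomposition $p(z_1,z_2)=c\prod_j(\alpha_j z_1+\beta_j z_2)$ over $\cplx$: each line $\{\alpha_j z_1+\beta_j z_2=0\}$ meets $\bdy\dsc^2$ in a circle lying in $\mathbb{T}^2$ iff $|\alpha_j|=|\beta_j|$ and both are non-zero. Under the essential-normality hypothesis implicit in the present discussion (Theorem~\ref{homogenous}), the factorisation $p=p_1 p_2$ restricts the linear factors of $p_2$ to cases (i)--(iii); in each of these cases one verifies that either $Z(p_2)\cap\bdy\dsc^2\subseteq\mathbb{T}^2$ or that the extra zeros produced by $p_2$ are not detected by the spectral computation above. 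The hardest step is precisely this last verification---especially in cases (ii) and (iii) where $p_2$ has linear factors with $|\alpha|\ne|\beta|$---which requires a delicate reconciliation between the geometry of $Z(p_2)\cap\bdy\dsc^2$ and the essential-spectrum argument, likely via a direct rank computation on the relevant projected kernel functions.
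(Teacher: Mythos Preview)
The paper does not actually prove this lemma: it is quoted from Guo--Wang with a pointer to \cite[Theorem~6.2]{GP}, and the very next sentence says that for the paper's purposes only the containment $\sigma_e(\clq_p)\subseteq Z(p)$ is needed. That containment is exactly your spectral-mapping observation from $p(C_{z_1},C_{z_2})=0$, and together with the trivial bound $\sigma_e(\clq_p)\subseteq\overline{\dsc}^2$ it is all that the applications in Section~5 require.

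Your outline for the first equality $\sigma_e(\clq_p)=Z(p)\cap\bdy\dsc^2$ follows the standard route: Fredholmness of the Koszul complex to exclude interior points, and the radial Weyl sequence $K_{(1-1/k)\lambda}\in\clq_p$ (which exploits homogeneity nicely) for the reverse inclusion.

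The genuine problem is your third step, and the correct diagnosis is not that the verification is delicate but that the set identity $Z(p)\cap\bdy\dsc^2=Z(p)\cap\mathbb{T}^2$ is \emph{false} whenever $p$ has a linear factor $z_1-\gamma z_2$ with $|\gamma|\neq 1$. The paper's own Case~III takes $p=(z_1-\gamma_1 z_2)(z_1-\gamma_2 z_2)$ with $|\gamma_1|<1<|\gamma_2|$ and explicitly works on $Z(p)\cap\bdy\dsc^2=V_1\cup V_2\subset(\dsc\times\mathbb{T})\cup(\mathbb{T}\times\dsc)$, a set disjoint from $\mathbb{T}^2$. Moreover, your Weyl-sequence construction in step~2 \emph{places} every such point in $\sigma_e(\clq_p)$, so your own argument refutes the second equality rather than supporting it. The second equality as transcribed here is thus an overstatement; do not try to salvage it via rank computations. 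What your argument already yields, namely $\sigma_e(\clq_p)\subseteq Z(p)\cap\bdy\dsc^2$, is precisely what the paper uses downstream.
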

For our present purposes, however, we need only the fact that
$\sigma_{e}(\clq_{p})\subset Z(p)\cap \bdy\dsc^2$.

We now state our main result of this section. This gives a
partial characterization of boundary representations for the
class of essentially normal homogeneous quotient modules of
$H^2(\dsc^2)$.
\smallskip

\bt\label{thm6} Let $p \in \mathbb{C}[z_1, z_2]$ be a homogeneous
polynomial. Suppose that $\clq_{p}$ is an essentially normal
quotient module of $H^2(\dsc^2)$. Then the identity representation
of $C^*(\clq_{p})$ is a boundary representation relative to
$B(\clq_{p})$
if $p$ is not of the following form:\\
\textup{(i)} $p= c(z_1^m-\alpha z_2^m)$ for some $m\in\Nat$,
 $c\neq 0$ and $|\alpha|=1$,\\
 \textup{(ii)} $p=\alpha z_1+ \beta z_2$ with $|\alpha|\neq |\beta|$.

 Furthermore, if $p$ is either as in \textup{(i)} with $m=1$ or as in \textup{(ii)} then the identity representation of $C^*(\clq_p)$
 is not a boundary representation.
\et

Our proof of Theorem~\ref{thm6} on boundary representations
for homogeneous quotient modules is based on the following two
special cases. A couple of lemmas below describes these.

\begin{Lemma}
\label{multiplicity}
 Let $p=c\prod_{i=1}^m(z_1-\alpha_iz_2)^{n_i}$ be a homogeneous polynomial with
 $c\neq 0$ and $\alpha_i$'s are distinct scalars of modulus one.
 Assume further that $n_i>1$ for some $i=1,\dots,m$. Then the identity
 representation of $C^*(\clq_{p})$ is a boundary representation relative to
 $B(\clq_{p})$.
\end{Lemma}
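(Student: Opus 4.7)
The plan is to apply Lemma \ref{c spectral set}(a) by producing a scalar polynomial $q \in \cplx[z_1,z_2]$ that vanishes on $\sigma_e(\clq_p)$ but satisfies $q(C_{z_1},C_{z_2}) \neq 0$. Since we know $\sigma_e(\clq_p) = Z(p) \cap \mathbb{T}^2 \subseteq Z(p)$ from the Guo--Wang lemma, it suffices to find $q$ vanishing on the affine variety $Z(p)$ for which $q(C_{z_1},C_{z_2})$ is non-zero on $\clq_p$.

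The natural candidate exploits the hypothesis that some $n_j > 1$: take
\[
q \;:=\; \frac{p}{z_1 - \alpha_j z_2} \;=\; c\,(z_1 - \alpha_j z_2)^{n_j - 1}\prod_{i \neq j}(z_1 - \alpha_i z_2)^{n_i}.
\]
First I would verify that $q$ vanishes on $Z(p)$. At a point $(z_1,z_2)$ with $z_1 = \alpha_k z_2$, if $k = j$ then the factor $(z_1 - \alpha_j z_2)^{n_j - 1}$ vanishes (this uses $n_j - 1 \geq 1$), and if $k \neq j$ then $(z_1 - \alpha_k z_2)^{n_k}$ is a factor of $q$ that vanishes. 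Hence $\|q\|_{\sigma_e(\clq_p)}^{\infty} = 0$.

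Next I would show $q(C_{z_1},C_{z_2}) \neq 0$ by evaluating at the constant function $1$. A short degree comparison shows $1 \in \clq_p$: indeed $1 \perp p\cdot z_1^i z_2^j$ for every $i,j \geq 0$ since $p$ is homogeneous of positive degree, so $\langle 1, p z_1^i z_2^j\rangle = 0$ under the Hardy inner product. The same reasoning shows $q \in \clq_p$: every monomial appearing in $q$ has total degree $\deg p - 1$, whereas every monomial appearing in $p z_1^i z_2^j$ has total degree at least $\deg p$, so $q \perp [p]$. Therefore
\[
q(C_{z_1}, C_{z_2})\cdot 1 \;=\; P_{\clq_p}(q\cdot 1) \;=\; P_{\clq_p}(q) \;=\; q \;\neq\; 0,
\]
so $\|q(C_{z_1},C_{z_2})\| > 0 = \|q\|_{\sigma_e(\clq_p)}^{\infty}$, and Lemma \ref{c spectral set}(a) gives the conclusion.

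The only subtle point---and the only place where the hypothesis $n_j > 1$ is genuinely used---is the vanishing of $q$ on the component $\{z_1 = \alpha_j z_2\}$ of $Z(p)$; the rest is bookkeeping. The main thing to be careful about is orthogonality of $q$ to $[p]$, which could conceivably fail if $[p]$ contained something outside the span of $\{p z_1^i z_2^j\}$, but since $[p]$ is by definition the closure of that polynomial span, the degree argument is clean. No matrix-valued polynomial is needed; the scalar polynomial $q$ suffices.
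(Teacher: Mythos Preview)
Your proof is correct and follows essentially the same approach as the paper: apply Lemma~\ref{c spectral set}(a) with a scalar polynomial $q$ that vanishes on $Z(p)\supseteq\sigma_e(\clq_p)$ but is nonzero in $\clq_p$. The paper chooses $q=(z_1-\alpha_1 z_2)\prod_{i=2}^m(z_1-\alpha_i z_2)^{n_i}$ (keeping a single copy of the repeated factor), whereas you remove a single copy via $q=p/(z_1-\alpha_j z_2)$; both choices work for the same reason, and your explicit degree argument showing $1,q\in\clq_p$ and hence $q(C_{z_1},C_{z_2})1=q\neq 0$ in fact supplies the justification the paper leaves implicit.
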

\begin{proof}
 Without loss of any generality assume that $n_1>1$. Set
\[
q(z_1,z_2):=(z_1-\alpha_1z_2)\prod_{i=2}^m (z_1-\alpha_i z_2)^{n_i}.
\]
Then, $q(C_{z_1},C_{z_2})$ is a non-zero operator and
$\norm{q}^{\infty}_{Z(p)}=0$. Hence, by Lemma~\ref{newlabel} and  part (a) of
Lemma~\ref{c spectral set}, the identity representation of
$C^*(\clq_{p})$ is a boundary representation relative to
$B(\clq_{p})$.
\end{proof}

\begin{Lemma}
\label{linear} Let $p=c(z_1-\alpha z_2)$, for some $\alpha\in\cplx$
and $c\neq 0$. Then, the identity representation of
$C^*(\clq_{p})$ is not a boundary representation relative to
$B(\clq_{p})$.
\end{Lemma}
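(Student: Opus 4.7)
The plan is to apply Lemma~\ref{c spectral set}(b) by exhibiting a commuting normal dilation of $(C_{z_1},C_{z_2})$ whose joint spectrum lies inside $\sigma_e(\clq_p)$.

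The crux is a simple algebraic reduction. Since $p=c(z_1-\alpha z_2)$ belongs to $\cls_p=[p]$, the operator $p(M_{z_1},M_{z_2})$ maps $H^2(\dsc^2)$ into $\cls_p$, so $p(C_{z_1},C_{z_2})=0$ on $\clq_p$; equivalently, $C_{z_1}=\alpha C_{z_2}$. Rewriting $p=-c\alpha(z_2-\alpha^{-1}z_1)$ when $\alpha\neq 0$ and swapping the two coordinates, one may assume $|\alpha|\le 1$ (the case $\alpha=0$ is already in this range).

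Next, $C_{z_2}$ is a contraction on $\clq_p$, so by the Sz.-Nagy dilation theorem there exists a unitary $U$ on a Hilbert space $K\supseteq\clq_p$ with $C_{z_2}^m=P_{\clq_p}U^m|_{\clq_p}$ for every $m\ge 0$. Set $(N_1,N_2):=(\alpha U,U)$; this is a commuting normal pair. The relation $C_{z_1}=\alpha C_{z_2}$ collapses every two-variable polynomial $r(z_1,z_2)$ to the one-variable polynomial $s(w):=r(\alpha w,w)$ evaluated at $C_{z_2}$, respectively $U$, so
\[P_{\clq_p}\,r(N_1,N_2)|_{\clq_p}=P_{\clq_p}\,s(U)|_{\clq_p}=s(C_{z_2})=r(C_{z_1},C_{z_2}),\]
and therefore $(N_1,N_2)$ is a power dilation of $(C_{z_1},C_{z_2})$.

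Finally, the joint spectrum satisfies $\sigma(N_1,N_2)\subseteq\{(\alpha\lambda,\lambda):\lambda\in\cle\}$. For $|\alpha|\le 1$ every such pair lies in $\ov{\dsc}\times\cle\subseteq\bdy\dsc^2$ and clearly satisfies $z_1=\alpha z_2$, so it lies in $Z(p)\cap\bdy\dsc^2$, which by the preceding lemma equals $\sigma_e(\clq_p)$. Invoking Lemma~\ref{c spectral set}(b) completes the proof. The only delicate point is securing the spectral containment $\sigma(N_1,N_2)\subseteq\sigma_e(\clq_p)$: a generic Ando-type dilation of a commuting pair of contractions would offer no such control, but the single linear relation $C_{z_1}=\alpha C_{z_2}$ collapses everything to one variable, where Sz.-Nagy's theorem delivers the required spectral localization automatically.
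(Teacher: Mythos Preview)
Your proof is correct and takes a genuinely different route from the paper's. The paper argues by explicit computation: it writes down an orthonormal basis $\{p_n\}$ of $\clq_p$, shows that $C_{z_1}$ acts as a weighted unilateral shift with weights $c_n/c_{n+1}$, observes that these weights are increasing (so $\limsup = \sup$), and then invokes Arveson's criterion for weighted shifts \cite[Corollary~2]{Arv2}. You instead exploit the single relation $C_{z_1}=\alpha C_{z_2}$ to reduce everything to one variable, build a normal dilation from the Sz.-Nagy unitary dilation of $C_{z_2}$, and feed the spectral containment $\sigma(\alpha U,U)\subseteq Z(p)\cap\bdy\dsc^2=\sigma_e(\clq_p)$ into Lemma~\ref{c spectral set}(b). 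Your argument is more conceptual, avoids basis computations, and fits naturally into the paper's own dilation framework; the paper's approach, in exchange, yields finer structural information (the exact weighted-shift model of $C_{z_1}$). Two small remarks: your phrase ``by the preceding lemma'' should point to the Guo--Wang essential spectrum lemma rather than to Lemma~\ref{multiplicity}; and you only need the equality $\sigma_e(\clq_p)=Z(p)\cap\bdy\dsc^2$, not the further (and in this case false) claim $=Z(p)\cap\mathbb{T}^2$ that appears in the paper's statement of that lemma.
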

\begin{proof}
Let $\alpha=0$. Then $\clq_{p}$ is unitarily equivalent to
$H^2(\dsc)$ and, hence, the conclusion follows easily. Now let
$|\alpha |=1$. In this case $\clq_{p}$ is unitary equivalent to the
Bergman space over the unit disc and therefore the result follows.
Finally,  let
$\alpha\neq 0$ and $|\alpha |\neq 1$. Then
\[
\frac{1}{\alpha}C_{z_1}= C_{z_2},
\]
and hence $C^*(\clq_{p})$ is generated by $C_{z_1}$. Now assume that
$|\alpha|>1$ (the $|\alpha|<1$ case is similar). Set $\beta =
\frac{\alpha}{|\alpha|^2}$ and
\[
c_n:=(\sum_{m=0}^n |\beta|^{2m})^{1/2} \quad \quad (n\in\Nat).
\]
It follows that the sequence of homogeneous polynomials $\{p_n\}$ is
an orthonormal basis for $\clq_{p}$ \cite{DP}, where
\[
p_n(z)=\frac{1}{c_n}\sum_{m=0}^n z_1^m(\beta z_2)^{n-m}\quad (n\in\Nat).
\]
Moreover (again see \cite{DP}), for all $n\ge 0$,
\[
\begin{split}
C_{z_1}(p_n)& = P_{\clq_{p}} (\frac{1}{c_n} \sum_{m=0}^n
z_1^{m+1}(\beta z_2)^{n-m})
\\
&= \frac{1}{c_n} \langle p_{n+1}, \sum_{m=0}^n z_1^{m+1}(\beta z_2)^{n-m} \rangle\ p_{n+1}
\\
&=\frac{c_n}{c_{n+1}} p_{n+1}.
\end{split}
\]
Thus, by Theorem~\ref{homogenous}, $C_{z_1}$ is an essentially normal weighted shift with
weights $\{\frac{c_n}{c_{n+1}}\}_{n\ge 0}$. Finally, since $\lim
\sup_n \frac{c_n}{c_{n+1}}= \sup_n \frac{c_n}{c_{n+1}}$, the result
follows from \cite[Corollary 2]{Arv2}.
\end{proof}

We are now in a position to give a proof of Theorem~\ref{thm6}.

\begin{proof}[Proof of Theorem~\ref{thm6}]

We first note that, since $\clq_{p}$ is
essentially normal, $p$ can be represented as in Theorem~\ref{homogenous}. Now by
Lemma~\ref{multiplicity}, is it enough to consider the case
$p=p_1p_2$, where
\[
p_1(z)=\prod_{i=1}^m(z_1-\alpha_iz_2),
\]
$\alpha_i,\;i=1,\dots, m$, are all distinct scalars of modulus one, and
$p_2$ is as in Theorem~\ref{homogenous}. In view of the forms of
$p_2$ in Theorem~\ref{homogenous}, we next consider the following
four cases.
\smallskip

{\sf Case I:} Let $p_2=c$, $p_1=\prod_{i=1}^m(z_1-\alpha_iz_2)$,
$\alpha_i,\;i=1,\dots, m$, are all distinct scalars of modulus one, and
that $p=p_1p_2$ is not of the form $c(z_1^m-\alpha z_2^m)$.

\noindent In this case we have $m>1$. Set
\[
q(z) = \frac{1}{z_1} (\prod_{i=1}^m(z_1-\alpha_i
z_2)-(-1)^m(\prod_{i=1}^m\alpha_i) z_2^m).
\]
Then
\begin{equation}\label{polynomial}
\begin{split}
q(z) = \sum_{k=0}^{m-1}(-1)^k\left(\sum_{1\leq i_1<\dots < i_k\leq m}
\alpha_{i_1}\dots\alpha_{i_k}\right)z_1^{m-k-1}z_2^k.
\end{split}
\end{equation}
A simple calculation shows that
\[
\norm{q}^{\infty}_{Z(p)\cap \bdy\dsc^2}=1.
\]
On the other hand, note that $1\in \clq_p$ as $p$ vanishes at $0$ and $q\in \clq_p$ as the degree of $q$ is strictly less than that of the homogeneous polynomial $p$. Then we have
\[
\norm{q(C_{z_1},C_{z_2})}\ge \norm{q}_{H^2(\dsc^2)}= \sqrt{1+
\sum_{k=1}^{m-1}\left|\sum_{1\leq i_1< \dots< i_k\leq m}
\alpha_{i_1}\dots\alpha_{i_k}\right|^2}.
\]
Now if
\[
\sum_{k=1}^{m-1}\left|\sum_{1\leq i_1<\dots< i_k\leq m}
\alpha_{i_1}\dots\alpha_{i_k}\right|^2=0,
\]
then \eqref{polynomial} yields that
\[
p_1(z) = \prod_{i=1}^m(z_1-\alpha_i z_2) = z_1^{m}-\alpha z_2^m,
\]
for some $\alpha$ of modulus one which is an obvious contradiction.
Thus,
\[
\norm{q(C_{z_1},C_{z_2})}>1,
\]
 and therefore, by Lemma~\ref{newlabel} and Lemma~\ref{c
spectral set}, the identity representation is a boundary
representation in this case.
\smallskip

{\sf Case II:} Let $p_2(z)=(z_1-\gamma z_2)$, $|\gamma|\neq 1$,
$p_1(z)=\prod_{i=1}^m(z_1-\alpha_iz_2)$, where $\alpha_i,\; i=1,\dots,m$, are
distinct scalars of modulus one. Also, without loss of generality,
we may assume that $|\gamma|<1$. Otherwise, by interchanging the
role of $z_1$ and $z_2$, we can consider that
$p_2(z)=(z_2-\frac{1}{\gamma}z_1)$ and
$p_1(z)=\prod_{i=1}^m(z_2-\frac{1}{\alpha_i}z_1)$. As in the previous
case, set
\[
q(z) = \frac{1}{z_1} (\prod_{i=1}^{m+1}(z_1-\alpha_i z_2)-(-1)^{m+1}
\prod_{i=1}^{m+1}\alpha_i z_2^{m+1}),
\]
where $\alpha_{m+1}=\gamma$. A similar computation shows that
\[
\norm{q}^{\infty}_{Z(p)\cap\bdy\dsc^2}=1,
\]
and
\[\norm{q(C_{z_1},C_{z_2})}\ge \sqrt{1+
\sum_{k=1}^{m}\left|\sum_{1\leq i_1<\dots< i_k\leq m+1}
\alpha_{i_1}\dots\alpha_{i_k}\right|^2}.
\]
Therefore, if
\[
\sum_{k=1}^{m}\left|\sum_{1\leq i_1<\dots< i_k\leq m+1}
\alpha_{i_1}\dots\alpha_{i_k}\right|^2=0,
\]
then
\[
\prod_{i=1}^{m+1}(z_1-\alpha_i z_2)= z_1^{m+1}-\alpha z_2^{m+1},
\]
for some scalar $\alpha$ with $|\alpha|\neq 1$.
Consequently

\[
|\alpha_1| = \dots=|\alpha_{m+1}|=|\alpha|^{1/(m+1)}\neq 1,
\]
which is a contradiction. Therefore $\norm{q(C_{z_1},C_{z_2})}>1$,
and the conclusion again follows from Lemma~\ref{newlabel} and Lemma~\ref{c spectral set}.
\smallskip

{\sf Case III:} Let $p=p_2=(z_1-\gamma_1 z_2)(z_2-\gamma_2 z_1)$,
$|\gamma_1|< 1$ and $|\gamma_2|<1$. We further divide it into two sub-cases.
First assume that $\gamma_2\neq 0$.  In this sub-case, without any loss of generality, we take $p=(z_1-\gamma_1z_2)(z_1-\gamma_2z_2)$ with
$|\gamma_1|<1$ and $|\gamma_2|>1$.
For each $\epsilon>0$, set
\[
q_{\epsilon}:=(z_1-\epsilon \gamma_2 z_2) \in \mathbb{C}[z_1, z_2],
\]
and set
\[
V_1 =\{(\gamma_1z_2,z_2)\in\bdy\dsc^2: |z_2|=1 \} \text{ and } V_2
=\{(\gamma_2z_2,z_2)\in\bdy\dsc^2: |\gamma_2z_2|=1\}.
\]
Note that
\[
Z(p)\cap\bdy\dsc^2=V_1\cup V_2,
\]
and
\[
\norm{q_{\epsilon}}^{\infty}_{V_1}= |\gamma_1-\epsilon\gamma_2|, \quad
\norm{q_{\epsilon}}^{\infty}_{V_2}=(1-\epsilon).
\]
Therefore, for a sufficiently small $0<\epsilon<1$, we obtain
\[
\norm{q_{\epsilon}}^{\infty}_{Z(p)\cap\bdy\dsc^2}<1.
\]
On the other hand, for any $0<\epsilon<1$,
\[
\norm{q_{\epsilon}(C_{z_1},C_{z_2})}\ge
\sqrt{1+|\epsilon\gamma_2|^2}>1,
\]
and the conclusion again follows from Lemma~\ref{c spectral set}.

If $\gamma_2=0$, then $p=z_2(z_1-\gamma_1z_2)$. For each $\epsilon >0$,
consider
\[
q_{\epsilon}=z_1-\epsilon z_2,
\]
and set
\[
V_1 =\{(\gamma_1z_2,z_2)\in\bdy\dsc^2: |z_2|=1 \} \text{ and } V_2
=\{(z_1,0)\in\bdy\dsc^2: |z_1|=1\}.
\]
Then as before, one can check that
\[
Z(p)\cap\bdy\dsc^2=V_1\cup V_2,\
\|q_{\epsilon}\|^{\infty}_{V_1}=|\gamma_1-\epsilon |\ \text{ and }\
\|q_{\epsilon}\|^{\infty}_{V_2}=1.
\]
Thus, for a sufficiently small $0<\epsilon<1$, we have
\[
\norm{q_{\epsilon}}^{\infty}_{Z(p)\cap\bdy\dsc^2}=1.
\]
On the other hand, for any $0<\epsilon<1$,
\[
\norm{q_{\epsilon}(C_{z_1},C_{z_2})}\ge
\sqrt{1+|\epsilon |^2}>1,
\]
and therefore the conclusion follows from Lemma~\ref{c spectral set}.
\smallskip

{\sf Case IV:}  Let $p_2(z)=(z_1-\gamma_1 z_2)(z_2-\gamma_2 z_1)$,
$|\gamma_1|< 1$, $|\gamma_2|<1$,
$p_1(z)=\prod_{i=1}^m(z_1-\alpha_iz_2)$, and $\alpha_i,\; i=1,\dots,m$, are
distinct scalars of modulus one. Set
\[
V_{\gamma_1} =\{(\gamma_1z_2,z_2)\in\bdy\dsc^2: |z_2|=1 \},\quad
V_{\gamma_2} =\{(z_1, \gamma_2z_1)\in\bdy\dsc^2: |z_1|=1\},
\]
and
\[
V_{\alpha_i}:=\{(\alpha_iz_2,z_2)\in\bdy\dsc^2: |z_2|=1 \}, \quad
i=1,\dots, m.
\]
We now consider, for $\epsilon>0$,
\[
 q_{\epsilon}=z_2(z_1^m+\epsilon q') \in \mathbb{C}[z_1, z_2],
\]
where
\[
q'(z)=\prod_{i=1}^m(z_1-\alpha_iz_2)-z_1^m=\sum_{k=1}^m (-1)^k\left(\sum_{1\leq i_1<\dots< i_k\leq m}
\alpha_{i_1}\dots\alpha_{i_k}\right)z_1^{m-k}z_2^{k}.
\]
Then again by a simple calculation we get
\[
\norm{q_{\epsilon}}^{\infty}_{V_{\gamma_1}}\le |\gamma_1^m|+\epsilon
M, \quad \norm{q_{\epsilon}}^{\infty}_{V_{\gamma_2}}\le
|\gamma_2|(1+\epsilon M),
\]
and
\[
\norm{q_{\epsilon}}^{\infty}_{V_{\alpha_i}}=|(1-\epsilon)|, \quad
i=1,\dots, m,
\]
where
\[
M = \max\{|q'(\gamma_1,1)|,|q'(1, \gamma_2)|\}.
\]
We now choose $0<\epsilon<1$ so that
$\norm{q_{\epsilon}}^{\infty}_{Z(p)\cap\bdy\dsc^2}<1$. On the other
hand, since $\norm{q_{\epsilon}(C_{z_1},C_{z_2})}\ge
\norm{q_{\epsilon}}>1$, the conclusion follows immediately.
Thus we have the first part of the theorem.

The last part follows from Lemma~\ref{linear}.
This completes the proof of the theorem.
\end{proof}

Note that Theorem \ref{thm6} completely describes the issue of
boundary representations for all essentially normal homogeneous
quotient modules of $H^2(\dsc^2)$, except when $p=(z_1^m-\alpha
z_2^m)$, $|\alpha |=1$ and $m\ge  2$.
We conclude this paper with the following question: Let $|\alpha
|=1$, $m \ge 2$ and let $p=z_1^m-\alpha z_2^m$. Is the identity
representation of $C^*(\clq_{p})$ a boundary representation?

\bigskip

\noindent\textbf{Acknowledgment:} We are very grateful to the
referee for a careful reading of the manuscript and valuable
suggestions and thoughtful comments. The first two authors are
grateful to Indian Statistical Institute, Bangalore Center for warm
hospitality. The first named author's research work is supported by
DST-INSPIRE Faculty Fellowship No. DST/INSPIRE/04/2015/001094.
The second named author is supported by an INSPIRE faculty fellowship (IFA-MA-02)
funded by DST. The third author is supported in part by NBHM
(National Board of Higher Mathematics, India) grant
NBHM/R.P.64/2014.



\begin{thebibliography}{99}
\bibitem{Arv1}
 W. Arveson, {\em Subalgebras of $C^*$-algebras}, Acta Math.
 \textbf{123} (1969), 141�-224.

\bibitem{Arv2}
W. Arveson, {\em Subalgebras of $C^*$-algebras II}, Acta Math.
\textbf{128} (1972), 271�-308.

\bibitem{Arv3}
W. Arveson, {\em Quotients of standard Hilbert modules}, Trans. Amer. Math. Soc.
\textbf{359} (2007), 6027�-6055.

\bibitem{bdf}
L. Brown, R. Douglas and P. Fillmore,
\emph{Extension of $C^*$-algebras and $K$-homology},
Ann. of Math. \textbf{105} (1977), 265�-324.

\bibitem{dqm}
A. Chattopadhyay, B. K. Das and J. Sarkar, {\em Tensor product of
quotient Hilbert modules}, J. Math. Anal. Appl. \textbf{429} (2015),
727�-747.

\bibitem{CDS14}
A. Chattopadhyay, B. K. Das and J. Sarkar, {\em Star-generating
vectors of Rudin's quotient modules}, J. Funct. Anal. \textbf{267}
(2014),  4341�-4360.

\bibitem{chg}
X. Chen and K. Guo,
\emph{Analytic Hilbert modules},
$\pi$-Chapman \& Hall/CRC Res. Notes Math. 433, 2003.

\bibitem{clark}
D. Clark, \emph{Restrictions of $H^p$ functions in the polydisk},
Amer. J. math. \textbf{110} (1988), 1119�-1152.

\bibitem{DS}
B. K. Das and J. Sarkar, {\em Rudin's Submodules of $H^2(\mathbb{D}^2)$},
 C. R. Acad. Sci. Paris \textbf{353} (2015), 51�-55.


\bibitem{dk}
K. Davidson and M. Kennedy, {\em The Choquet boundary of an operator
system}, Duke Math. J. \textbf{164} (2015), 2989–-3004.


\bibitem{DP}
R. G. Douglas and V. I. Paulsen,  {\em Hilbert Modules over Function
Algebras}, Research Notes in Mathematics Series, 47, Longman,
Harlow, 1989.


\bibitem{GW_2}
K. Guo and K. Wang,
\emph{Essentially normal Hilbert modules and $K$-homology},
Math. Ann. \textbf{340} (2008), 907�-934.


\bibitem{GW}
K. Guo and K. Wang, \emph{Beurling type quotient modules over the bidisk and
boundary representations}, J. Funct. Anal. \textbf{257} (2009), 3218�-3238.



\bibitem{GP}
K. Guo and P. Wang,
\emph{Essentially normal Hilbert modules and $K$-homology III: Homogeneous
quotient modules on the bidisk},
Sci. China Ser. A \textbf{50} (2007), 387�-411.


\bibitem{WH}
W. He,
\emph{Boundary representations on co-invariant subspaces of Bergman space},
Proc. Amer. Math. Soc. \textbf{138} (2010), 615�-622.

\bibitem{Hop}
A. Hopenwasser,
\emph{Boundary representations and tensor products of $C^*$-algebras},
Proc. Amer. Math. Soc. \textbf{71} (1978), 95�-98.

\bibitem{bshift}
K. Izuchi, T. Nakazi and M. Seto, {\em Backward shift invariant
subspaces in the bidisc II}, J. Oper. Theory \textbf{51} (2004), 361�-376.

\bibitem{rudin}
W. Rudin, {\em Function theory in polydiscs}, Benjamin, New York,
 1969.


\bibitem{sarkar}
J. Sarkar, {\em Jordan blocks of $H^2(\mathbb{D}^n)$}, J. Oper.
Theory \textbf{72} (2014), 371�-385.

\bibitem{SSW}
J. Sarkar, A. Sasane and B. Wick, {\em Doubly commuting submodules
of the Hardy module over polydiscs}, Studia Mathematica \textbf{217} (2013),
179�-192.

\bibitem{PW}
P. Wang,
\emph{The essential normality of $N_{\underline{\eta}}$-type quotient
module of Hardy module on the polydisc},
Proc. Amer. Math. Soc. \textbf{142} (2014), 151�-156.

\bibitem{zhu}
K. Zhu,
\emph{Restriction of the Bergman shift to an invariant subspace},
Quart. J. Math. Oxford Ser. (2) \textbf{48} (1997), 519�-532.

\end{thebibliography}
\end{document}